\documentclass[a4paper,12pt]{article}

\usepackage{graphicx}        
\usepackage{multicol}        
\usepackage[bottom]{footmisc}
\usepackage{url}

\usepackage{amsmath}
\usepackage{amsthm}
\usepackage{amssymb}
\usepackage{manfnt}





\newcommand{\pt}{\partial}


\newcommand{\laplace}{\Delta}

\newcommand{\eps}{\varepsilon}
\newcommand{\R}{\mathbb{R}}
\newcommand{\E}{\mathrm{e}}
\newcommand{\wom}{{\widehat \Omega}}
\newcommand{\C}{c\,}
\newcommand{\LL}{\mathcal{L}}

\newcommand{\norm}[2]{\|{#1}\|_{#2}}

\newcommand{\ord}[1]{\mathcal{O}\left(#1\right)}
\renewcommand{\tilde}{\widetilde}

\newcommand{\ve}[1]{\boldsymbol{#1}}

\newcommand{\cube}{{\mbox{\scriptsize\mancube}}}

\theoremstyle{plain}
\newtheorem{theorem}{Theorem}
\newtheorem{lemma}[theorem]{Lemma}

\theoremstyle{remark}
\newtheorem{remark}[theorem]{Remark}


\setlength{\paperheight}{29.7cm}
\setlength{\paperwidth}{21cm}
\setlength{\voffset}{0cm}
\setlength{\hoffset}{0cm}
\setlength{\textwidth}{16cm}
\setlength{\oddsidemargin}{0cm}
\setlength{\evensidemargin}{0cm}
\setlength{\topmargin}{0cm}
\setlength{\headheight}{14.5pt}
\setlength{\headsep}{0.5cm}
\setlength{\textheight}{22.5cm}
\setlength{\topskip}{0.5cm}
\setlength{\footskip}{1cm}
\setlength{\tabcolsep}{0.2cm}


\begin{document}

\title{On the sharpness of Green's function estimates for a convection-diffusion
       problem%
       \footnote{This work has been supported by Science Foundation Ireland
                 under the Research Frontiers Programme 2008;
                 Grant 08/RFP/MTH1536.
                 The second author was also supported
                 by the Bulgarian Foundation for Science; project DID 02/37-2009.}}

\author{Sebastian~Franz\footnote{
         Institut f\"ur Numerische Mathematik,
         Technische Universit\"at Dresden,
         01062 Dresden, Germany\newline
         e-mail: sebastian.franz@tu-dresden.de}
        \and
        Natalia~Kopteva\footnote{
         Department of Mathematics and Statistics,
         University of Limerick,
         Limerick,
         Ireland\newline
         e-mail: natalia.kopteva@ul.ie}
        }


   \maketitle
   \abstract{
   Linear singularly perturbed convection-diffusion
    problems with characteristic layers are considered in three dimensions.
We demonstrate the sharpness of our recently obtained upper bounds for the associated Green's
    function and its derivatives in the $L_1$ norm.
For this, in this paper we establish
the corresponding lower bounds.
Both upper and lower bounds explicitly show any dependence
on the singular perturbation parameter.}

    \textit{AMS subject classification (2000):} 35J08, 35J25, 65N15

    \textit{Key words:} Green's function,
                        singular perturbations,
                        convection-diffusion, a posteriori error estimates\vspace{-0.1cm}

   \section{Introduction}
%
Consider the convection-diffusion problem in the domain $\Omega=(0,1)^3$:\vspace{-0.1cm}
   \begin{subequations}\label{eq:Lu}
   \begin{align}
     \LL_{\ve{x}}u(\ve{x})=-\eps\laplace_{\ve{x}}u(\ve{x})-2\alpha\,\pt_{x_1} u(\ve{x})
     &=f(\ve{x})&&\mbox{for }\ve{x}\in\Omega,\\
     u(\ve{x})&=0&&\mbox{for }\ve{x}\in\partial\Omega.
   \end{align}
   \end{subequations}
   Here $\eps\in(0,1]$ is a small positive parameter, while
   $\alpha$ is a positive constant. Then \eqref{eq:Lu}
   is a singularly perturbed convection-dominated problem,
   whose solutions typically exhibit sharp characteristic boundary and interior layers.

   This article addresses the sharpness of our recently published obtained
   upper bounds for the associated Green's
    function and its derivatives in the $L_1$ norm.
   Our interest in considering the Green's function of problem
   is motivated by the numerical analysis of this computationally
   challenging problem.
   More specifically, these estimates will be used in the forthcoming
   paper \cite{FK10_NA} to derive robust a posteriori
   error bounds for computed solutions of this problem using
   finite-difference methods.
   (This approach is related to recent articles \cite{Kopt08,CK09},
   which address the numerical solution of singularly perturbed
   equations of reaction-diffusion type.)
   In a more general numerical-analysis context, 
   we note that sharp estimates for continuous Green's functions (or
   their generalised versions) frequently play a crucial role in a
   priori and a posteriori error analyses \cite{erikss,Leyk,notch}.

 For each fixed $\ve{x}\in\Omega$, the Green's function $G$
 associated with (\ref{eq:Lu}) satisfies
   \begin{subequations}\label{eq:Green_adj}
   \begin{align}
     \hspace{-0.3cm}
     \LL^*_{\ve\xi}G(\ve{x};\ve\xi)
          :=-\eps\laplace_{\ve\xi}G(\ve{x};\ve\xi)+2\alpha\, \pt_{\xi_1}\!G(\ve{x};\ve\xi)
         &=\delta(\ve{x}-\ve\xi)&&
    \mbox{for }\ve\xi\in\Omega,\\
     G(\ve{x};\ve\xi)&=0&&\mbox{for }\ve\xi\in\partial\Omega.
   \end{align}
   \end{subequations}
   Here $\LL^*_{\ve\xi}$ is the adjoint differential operator to $\LL_{\ve{x}}$,
   and $\delta(\cdot)$ is the three-dimensional Dirac $\delta$-distribution.

Note that the Green's function for a singularly perturbed self-adjoint
reaction-diffusion operator $-\eps\triangle_{\ve{x}}+\alpha$ is almost
radially symmetric and exponentially decaying away from the singular point
 \cite{CK09}.
By contrast, the Green's function for our convection-diffusion problem~(\ref{eq:Lu})
exhibits a strong anisotropic structure, which is demonstrated by Figure~\ref{fig:green}.

   \begin{figure}[tb]
      \centerline{
      \includegraphics[width=0.5\textwidth]{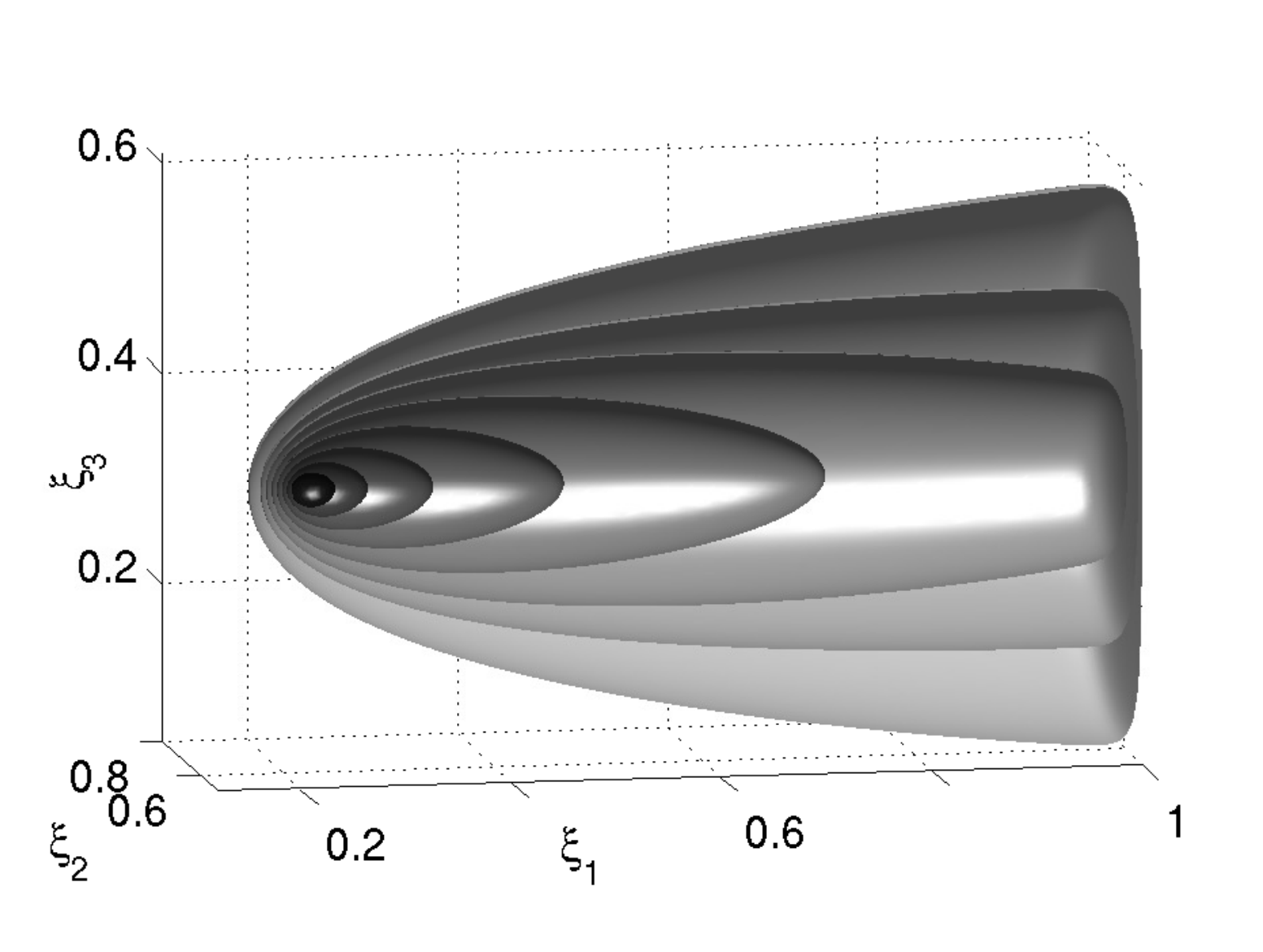}
      \includegraphics[width=0.5\textwidth]{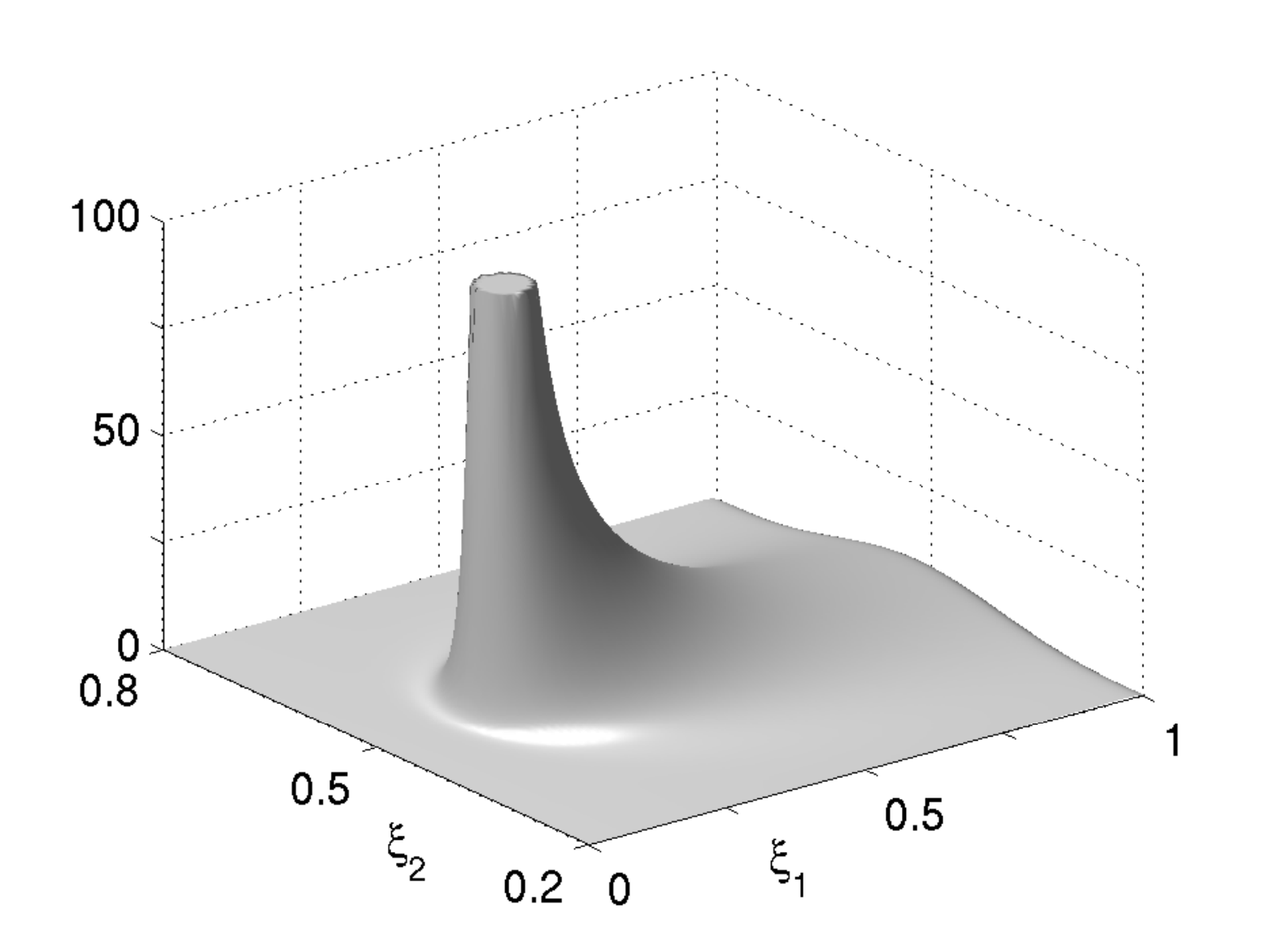}}
      \caption{Anisotropy of the Green's function $G(\ve{x};\ve\xi)$ associated with
      (\ref{eq:Lu}) for
$\eps=0.01$ and $\ve{x}=(\frac15,\frac12,\frac13)$.
Left: 
isosurfaces at values of
   $1,\,4,\,8,\,16,\,32,\,64,\,128$,  and $256$.
   Right: a two-dimensional graph for fixed $\xi_3=x_3$.}
             \label{fig:green}
   \end{figure}

   In \cite{FK10_TR1, FK11_1}
   we have obtained certain upper bounds for the Green's function 
   associated with a variable-coefficient version of
   (\ref{eq:Lu}), which
   we now cite.

   \begin{theorem}[\cite{FK10_TR1, FK11_1}]\label{thm:main_upper}
      Let $\eps\in(0,1]$. The Green's function $G$ associated with~\eqref{eq:Lu}
      on the unit cube $\Omega=(0,1)^3$ satisfies, for all $\ve{x}\in\Omega$,
      the following \underline{upper bounds}:
      \begin{subequations}\label{eq_theorem}
      \begin{align}
         \norm{\pt_{\xi_1} G(\ve{x};\cdot)}{1;\Omega}
            &\leq C(1+|\ln \eps|),\\
         \norm{\pt_{\xi_k} G(\ve{x};\cdot)}{1;\Omega}
            &\leq C\eps^{-1/2},\quad k=2,3.
      \end{align}
      Furthermore,  for any ball $B(\ve{x}',\rho)$
    of radius $\rho$ centred at any  $\ve{x}'\in\Omega$, we have
      \begin{align}
         \norm{G(\ve{x};\cdot)}{1,1;\Omega\cap B(\ve{x}',\rho)}
            &\leq C
           \rho/\eps,
      \end{align}
      while for any ball $B(\ve{x},\rho)$
      of radius $\rho$ centred at $\ve{x}\in\Omega$, we have
      \begin{align}
          \norm{\pt^2_{\xi_1} G(\ve{x};\cdot)}{1;\Omega\setminus B(\ve{x},\rho)}
            &\leq C\eps^{-1}\ln(2+\eps/\rho),\\
          \norm{\pt^2_{\xi_k} G(\ve{x};\cdot)}{1;\Omega\setminus B(\ve{x},\rho)}
            &\leq C\eps^{-1}(|\ln\eps|+\ln(2+\eps/\rho)),\quad k=2,3.
      \end{align}
      \end{subequations}
   \end{theorem}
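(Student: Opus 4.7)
The plan is to obtain pointwise bounds on $G(\ve{x};\ve\xi)$ and its derivatives that capture the anisotropic wake structure visible in Figure~\ref{fig:green}, and then to integrate those bounds in anisotropically scaled coordinates.

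\textbf{Step 1 (free-space fundamental solution).} I would set $\ve{y}=\ve\xi-\ve{x}$ and eliminate the convection via the substitution $\tilde G(\ve{y})=e^{\alpha y_1/\eps}\,v(\ve{y})$, which reduces $\LL^*\tilde G=\delta(\ve{y})$ on $\R^3$ to the Yukawa equation $-\eps\laplace v + (\alpha^2/\eps)\,v=\delta(\ve{y})$. Its classical fundamental solution gives
\[
  \tilde G(\ve{y})
  \;=\;\frac{1}{4\pi\eps\,|\ve{y}|}\,\exp\!\Bigl(-\frac{\alpha(|\ve{y}|-y_1)}{\eps}\Bigr).
\]
For $y_1>0$ and $|y_2|,|y_3|\ll y_1$ the exponent behaves like $-\alpha(y_2^2+y_3^2)/(2\eps y_1)$, so $\tilde G$ is concentrated in a parabolic wake of transverse width $\sqrt{\eps y_1/\alpha}$; for $y_1<0$ it decays exponentially on the scale $\eps/\alpha$.

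\textbf{Step 2 (barriers on the cube).} Since $\tilde G$ does not satisfy the homogeneous Dirichlet data on $\pt\Omega$, I would dominate $G$ by a barrier $\tilde G+W$ using the maximum principle for $\LL^*$. The corrector $W$ is constructed as a superposition of an exponential boundary layer of width $O(\eps)$ near the inflow face $\xi_1=0$ and two characteristic layers of width $O(\sqrt\eps)$ near the lateral faces $\xi_k\in\{0,1\}$, $k=2,3$. Differentiating the barrier yields pointwise bounds for $\pt_{\xi_j}G$ and $\pt_{\xi_j}^2 G$ whose dominant part retains the anisotropic wake structure of the corresponding derivatives of $\tilde G$, augmented by exponentially small boundary contributions.

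\textbf{Step 3 (integration).} For fixed $y_1\in(0,1)$ the exponential in $\tilde G$ is essentially Gaussian in $(y_2,y_3)$ with variance $\eps y_1/\alpha$, so integrating transversally yields a factor $O(\eps y_1)$ that absorbs the $1/|\ve{y}|^2$ singularity produced by a first derivative and reduces each $L_1$ estimate to a one-dimensional integral in $y_1$. Bound~(a) then follows from $\int_0^1 dy_1/(y_1+\eps)\sim |\ln\eps|$; bound~(b) produces $\eps^{-1/2}$ because a transverse derivative brings down the reciprocal of the Gaussian width $\sqrt{\eps y_1}$, leaving $\int_0^1 y_1^{-1/2}dy_1$; bound~(c) is obtained by repeating the same computation but localising the integration to $\Omega\cap B(\ve{x}',\rho)$; and the second-derivative bounds~(d)--(e) acquire the additional factor $\ln(2+\eps/\rho)$ from the logarithmic singularity of $|\pt^2\tilde G|$ at the source, cut off by removing the ball $B(\ve{x},\rho)$.

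The principal obstacle is Step~2: constructing barriers that respect the Dirichlet data on all six faces of $\Omega$ without losing the sharp anisotropic pointwise control provided by $\tilde G$. The difficulty is compounded by the coexistence of the exponential boundary layer at $\xi_1=0$ with the two characteristic layers at $\xi_{2,3}\in\{0,1\}$: along the edges of $\Omega$ these layers interact, and a naive superposition would overestimate the corrector and destroy the logarithmic and $\eps^{-1/2}$ scalings that make Theorem~\ref{thm:main_upper} sharp. Once suitable barriers and pointwise bounds are in place, the $L_1$ integrations in Step~3 are technical but essentially routine calculus in the anisotropically scaled variables.
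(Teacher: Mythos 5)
First, for orientation: the present paper does not actually prove Theorem~\ref{thm:main_upper} --- it quotes it from \cite{FK10_TR1,FK11_1} --- but the technique of those proofs is mirrored in Sections~\ref{sec:approx}--\ref{sec_bounded}: one builds an \emph{explicit} approximation of $G$ by the method of images with cut-off functions (the $\bar G$ of \eqref{bar_G_g} and its analogue $\bar G_\cube$), computes the $L_1$ norms of its derivatives directly from the closed-form expressions \eqref{g_xi1}--\eqref{g2_xi2_xi2}, and controls the defect $v=\bar G-G$, which solves $\LL^*_{\ve\xi}v=\phi$ with $\norm{\phi(\ve x;\cdot)}{1;\Omega}\le C\E^{-c\alpha/\eps}$, so that its contribution is exponentially negligible. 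Your Step 1 is correct (your $\tilde G$ is exactly the $g$ of \eqref{eq:def_g0}), and the one-dimensional integrals in Step 3 ($\int_1^{1/\eps}t^{-1}\,dt$ for the $|\ln\eps|$, $\int_1^{1/\eps}t^{-1/2}\,dt$ for the $\eps^{-1/2}$, $\int_{\rho/\eps}\widehat r^{-1}\,d\widehat r$ for the $\ln(2+\eps/\rho)$) are precisely the computations from which the sharp rates arise.

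The genuine gap is Step 2, and you have correctly identified it as the crux but proposed a mechanism that does not work. A comparison-principle barrier gives $0\le G\le\tilde G+W$, i.e.\ a pointwise bound on $G$ \emph{itself} (in fact $\tilde G$ alone is already an upper barrier, since $\tilde G>0$ on $\pt\Omega$ and the singularities match); but an inequality between functions cannot be differentiated, so ``differentiating the barrier'' does not yield pointwise bounds on $\pt_{\xi_j}G$ or $\pt^2_{\xi_j}G$ --- and the theorem is almost entirely about derivatives. To convert a pointwise bound on $G$ into derivative bounds you would need interior and boundary Schauder or $W^{2,p}$ estimates applied in anisotropically scaled boxes ($\eps$ in $\xi_1$, $\sqrt{\eps|\xi_1-x_1|}$ transversally), a substantial programme you do not sketch and one that tends to lose exactly the logarithmic and $\eps^{-1/2}$ factors at stake. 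This is precisely why the actual proof replaces the barrier by an explicit image-charge approximation whose derivatives are computable in closed form, and measures the error in the weak sense of the $L_1$ norm of the residual rather than pointwise. Two secondary points: for the adjoint operator $-\eps\laplace_{\ve\xi}+2\alpha\pt_{\xi_1}$ the $O(\eps)$ exponential layer sits at the outflow face $\xi_1=1$ (where the wake of $g$ meets the boundary; compare the term $\lambda^-g_{[2-x_1]}\omega(\xi_1)$ in \eqref{bar_G_g}), not at $\xi_1=0$; and your Step 3 does not account for the extra $|\ln\eps|$ in the bound for $\pt^2_{\xi_k}G$, $k=2,3$, which comes from integrating the transverse second derivatives over the whole wake rather than from the point singularity.
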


   \begin{remark}
 Theorem~\ref{thm:main_upper} is given in \cite{FK10_TR1, FK11_1}
   for a more general variable-coefficient operator
$-\eps\laplace_{\ve{x}}-a(\ve{x})\,\pt_{x_1} +b(\ve{x})$ with
sufficiently smooth coefficients that satisfy
$a(\ve{x})\ge2\alpha>0$
and
$b(\ve{x})+\pt_{x_1}a(\ve{x})\ge 0$ for all $\ve{x}\in\bar\Omega$.
   \end{remark}

   The purpose of this paper is to show the sharpness of the bounds
   of Theorem~\ref{thm:main_upper}
   up to a constant $\eps$-independent multiplier
   in the following sense.
   \begin{theorem}\label{thm:main_lower}
Let $\eps\in(0,c_0]$ for some sufficiently small positive $c_0$.
The Green's function $G$ associated with~the constant-coefficient
problem \eqref{eq:Lu}
      in the unit cube $\Omega=(0,1)^3$ satisfies,
      for all $\ve{x}\in[\frac14,\frac34]^3$,
      the following \underline{lower bounds}:%
      \begin{subequations}\label{eq_thm:main_lower}
      \begin{align}
         \norm{\pt_{\xi_1} G(\ve{x};\cdot)}{1;\Omega}
            &\geq \C|\ln \eps|,\\
         \norm{\pt_{\xi_k} G(\ve{x};\cdot)}{1;\Omega}
            &\geq \C\eps^{-1/2},\quad k=2,3.
            \intertext{Furthermore, for any ball $B(\ve{x};\rho)$ of radius $\rho\le\frac18$, we have}
         \norm{G(\ve{x};\cdot)}{1,1;\Omega\cap B(\ve{x},\rho)}
            &\geq \begin{cases}
                                 \C\rho/\eps, & \mbox{if~}\rho\le 2\eps,\\
                                          \C(\rho/\eps)^{1/2},&\mbox{otherwise},\\
                                       \end{cases}\\
         \norm{\pt^2_{\xi_1} G(\ve{x};\cdot)}{1;\Omega\setminus B(\ve{x},\rho)}
                       &\geq \C\eps^{-1}\ln(2+\eps/\rho),
           \quad&&\mbox{if~}\rho\le c_1\eps,
            \\
            \norm{\pt^2_{\xi_k} G(\ve{x};\cdot)}{1;\Omega\setminus B(\ve{x},\rho)}
            &\geq \C\eps^{-1}(\ln(2+\eps/\rho)+|\ln\eps|),
           &&\mbox{if~}\rho\le{\textstyle\frac18},
      \end{align}
      where $\;k=2,3$, and $c_1$ is a sufficiently small positive constant.
      \end{subequations}
   \end{theorem}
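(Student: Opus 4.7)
The plan is to compare $G$ with the free-space Green's function of $\LL^*$ on $\R^3$. The standard transformation $G(\ve{x};\ve\xi)=\E^{\alpha(\xi_1-x_1)/\eps}\tilde G(\ve{x};\ve\xi)$ reduces the constant-coefficient adjoint equation to $-\eps\laplace_{\ve\xi}\tilde G+(\alpha^2/\eps)\tilde G=\delta(\ve\xi-\ve{x})$, whose free-space fundamental solution is explicit, yielding
\[
\mathcal{G}(\ve{x};\ve\xi)=\frac{1}{4\pi\eps\,|\ve\xi-\ve{x}|}\,\exp\!\Bigl(-\tfrac{\alpha}{\eps}\bigl(|\ve\xi-\ve{x}|-(\xi_1-x_1)\bigr)\Bigr).
\]
This kernel encodes both the $(\eps r)^{-1}$ singularity at $\ve\xi=\ve{x}$ and the anisotropic parabolic wake in the $+\xi_1$ direction visible in Figure~\ref{fig:green}. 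We shall decompose $G=\mathcal{G}-z$, where $z$ solves $\LL^*_{\ve\xi}z=0$ in $\Omega$ with $z|_{\pt\Omega}=\mathcal{G}|_{\pt\Omega}$.

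The second step is to show that $z$ and its first two derivatives are exponentially small in $\eps^{-1}$ on the ``safe'' subdomain $\Omega_0=\{\ve\xi\in\Omega:\xi_1\le 7/8\}$. Since $\ve{x}\in[\tfrac14,\tfrac34]^3$ is interior, a direct estimate shows that $\mathcal{G}|_{\pt\Omega}$ is pointwise of size $\mathcal{O}(\eps^{-1})$ only on the outflow face $\xi_1=1$, where it is concentrated in a Gaussian patch of width $\mathcal{O}(\sqrt{\eps})$, and is $\mathcal{O}(\E^{-c/\eps})$ on every other face. The explicit barrier
\[
\bar z(\ve\xi)=A\eps^{-1}\E^{2\alpha(\xi_1-1)/\eps}+B\E^{-c/\eps}
\]
satisfies $\LL^*\bar z=0$ and, for suitable constants $A,B,c>0$, dominates $\mathcal{G}|_{\pt\Omega}$, so $|z|\le\bar z$ in $\Omega$ by the maximum principle; the right-hand side is $\mathcal{O}(\E^{-c'/\eps})$ on $\Omega_0$. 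Standard interior elliptic estimates (or an analogous barrier for the differentiated equations) then transfer this exponential smallness to $\grad z$ and $\grad^2 z$ on a slightly smaller subdomain of~$\Omega_0$.

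Each lower bound in~\eqref{eq_thm:main_lower} will then follow from an explicit computation of the corresponding integral of $\mathcal{G}$ over a suitable subregion of $\Omega_0$, the $z$-contribution being absorbed by the exponential smallness. For~(a), we integrate $|\pt_{\xi_1}\mathcal{G}|\gtrsim\mathcal{G}/r_1$ over the parabolic wake tube $T=\{\ve\xi:r_1:=\xi_1-x_1\in[\eps,\tfrac18],\,r_2^2+r_3^2\le c\eps r_1\}$, inside which $\mathcal{G}\gtrsim(\eps r_1)^{-1}$; this produces the logarithmic integral $\int_\eps^{1/8}dr_1/r_1\sim|\ln\eps|$. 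For~(b), integration of $|\pt_{\xi_k}\mathcal{G}|\gtrsim(\alpha/\eps)|r_k|\mathcal{G}/r$ over $T$ yields $\eps^{-1/2}$. For~(c) with $\rho\le 2\eps$, $|\grad\mathcal{G}|\gtrsim(\eps r^2)^{-1}$ on $B(\ve{x},\rho)$ integrates to $\rho/\eps$; for $\rho>2\eps$ the wake portion of $B(\ve{x},\rho)$ gives the stated $(\rho/\eps)^{1/2}$. The $\eps^{-1}\ln(2+\eps/\rho)$ term in~(d) and its counterpart in~(e) come from $|\pt_{\xi_k}^2\mathcal{G}|\gtrsim(\eps r^3)^{-1}$ integrated over the near-field annulus $B(\ve{x},c_1\eps)\setminus B(\ve{x},\rho)$, while the additional $\eps^{-1}|\ln\eps|$ in~(e) is generated by $|\pt_{\xi_k}^2\mathcal{G}|\gtrsim(\eps^2 r_1^2)^{-1}$ integrated over a thin cylinder along the wake axis on which $\pt_{\xi_k}^2\mathcal{G}$ has a definite sign.

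The main obstacle will be the quantitative control of $z$ on $\Omega_0$. The outflow face $\xi_1=1$ carries boundary values of $\mathcal{G}$ of size $\mathcal{O}(\eps^{-1})$, so it is not a priori clear that $z$ is negligible in the wake region where the lower bounds are generated; the choice of $\Omega_0$, and hence the restrictions $\rho\le\tfrac18$ and $\ve{x}\in[\tfrac14,\tfrac34]^3$, is dictated by the need to sit inside the exponentially decaying tail of the barrier $\bar z$. A secondary technical point in~(e) is the identification of a sub-region of the wake on which $\pt_{\xi_k}^2\mathcal{G}$ has a fixed sign, so that passage to absolute values does not destroy the logarithmic factor.
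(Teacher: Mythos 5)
Your plan is sound, and its computational core coincides with the paper's: your kernel $\mathcal{G}$ is exactly the fundamental solution $g$ of \eqref{eq:def_g0} (obtained there by the same exponential substitution), and the regions you integrate over --- the parabolic wake tube for (a), (b) and the $|\ln\eps|$ part of (e), a cone inside $B(\ve{x},\rho)$ for (c), and the near-field annulus for (d) --- are, after the rescaling $\widehat{\ve\xi}=(\ve\xi-\ve{x})/\eps$, precisely the subdomains $\wom_2$, $\wom_3$, $\wom_1$ of Lemma~\ref{lem:fundamental}, with the same pointwise estimates and the same resulting one-dimensional integrals. Where you genuinely diverge is the passage from $\R^3$ to $\Omega$. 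The paper builds an explicit approximation $\bar G_\cube$ by the method of images with cut-off functions, dominates the image terms via weighted upper bounds for $g$ (Lemma~\ref{lem:g_lmbd_bounds}), and controls the defect $v=\bar G-G$, which solves $\LL^{*}v=\phi$ with $\phi$ exponentially small in $L_1$, through the representation $v=\iiint G\phi$ combined with the \emph{upper} bounds of Theorem~\ref{thm:main_upper}; this delivers smallness of $v$ directly in the $L_1$ and $W^{1,1}$ norms in which the lower bounds are stated, with no pointwise information required. You instead write $G=\mathcal{G}-z$ with $z$ the boundary corrector, kill $z$ on $\{\xi_1\le 7/8\}$ with the barrier $A\eps^{-1}\E^{2\alpha(\xi_1-1)/\eps}+B\E^{-c/\eps}$ (which is indeed annihilated by $\LL^{*}$ and does dominate $\mathcal{G}$ on $\pt\Omega$), and transfer the smallness to two derivatives by interior estimates. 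Your route is self-contained --- it needs neither the image sums nor the upper bounds of Theorem~\ref{thm:main_upper} --- and treats all six faces of the cube at once, whereas the paper first handles $(0,1)\times\R^2$ and then reflects in $\xi_2,\xi_3$. What it costs you is pointwise control of $\grad z$ and $\grad^2 z$: the interior estimates for $-\eps\laplace+2\alpha\pt_{\xi_1}$ carry constants of order $\eps^{-1}$ and $\eps^{-2}$ after rescaling, so you must say explicitly that these polynomial losses are absorbed by the factor $\E^{-c'/\eps}$ and that every region generating a lower bound lies at distance at least $\tfrac18$ from $\pt\Omega$, so interior estimates apply; you have anticipated the latter point. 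One small correction to part (a): on the wake axis the bracket in $\pt_{\xi_1}\mathcal{G}$ equals $-\widehat\xi_1/\widehat r\approx -1$, while near the edge of a parabolic tube of width $c\eps r_1$ it turns positive once $\alpha(\widehat r-\widehat\xi_1)>1$; so the claim $|\pt_{\xi_1}\mathcal{G}|\gtrsim\mathcal{G}/r_1$ holds only on a sub-tube chosen so that this bracket stays bounded away from zero (the paper instead keeps the positive part over the outer portion of the paraboloid) --- either choice yields the $\int_\eps^{1/8}dr_1/r_1$ you want, but the tube width must be fixed accordingly.
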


   \begin{remark}
     The restriction $\ve{x}\in[\frac14,\frac34]^3$ is somewhat arbitrary
     and can be replaced by $\ve{x}\in[\theta,1-\theta]^3$ with
     any $\eps$-independent constant $\theta\in(0,\frac12)$
     (then $\rho\le\frac12\theta$ is imposed instead of $\rho\le\frac18$).
   \end{remark}

 The paper is structured as follows.
   Sharp lower
   bounds for the fundamental solution in $\R^3$
   are derived in Section~\ref{sec:fundamental}.
   The next Section~\ref{sec:approx} is devoted to
the proof of
   Theorem~\ref{thm:main_lower} in a simpler setting, for the domain $(0,1)\times\R^2$.
Sections~\ref{sec_bounded} and ~\ref{sec:reac} briefly describe
 an extension of the analysis of
Section~\ref{sec:fundamental} to the domain $\Omega=(0,1)^3$
and to convection-reaction-diffusion problems.
   Finally, in Section~\ref{sec:outlook}, we give an outlook
   for problems in $n$ dimensions.

\textit{Notation.} Throughout the paper,
   $C$ denotes a generic positive constant, typically sufficiently large,
    while $\C$ denotes a sufficiently small generic positive constant;
    they take different values in different formulas, but are
    independent of the singular perturbation parameter $\eps$.
   The usual Sobolev spaces $W^{m,p}(D)$ and $L_p(D)$
   on any measurable
  set $D\subset\R^3$ are used; 
   the $L_p(D)$ norm is denoted by $\norm{\cdot}{p;D}$,
   while the $W^{m,p}(D)$ norm is denoted
   by $\norm{\cdot}{m,p;D}$.
   By $\ve{x}=(x_1,x_2,x_3)$ we denote an element of $\R^3$.
   For an open ball in $\R^3$, we employ the notation
   $B(\ve{x'},\rho)=\{\ve{x}\in\R^3: \sum_{k=1}^3(x_k-x'_k)^2<\rho^2\}$.
   The notation $\pt_{x_1} f$, $\pt^2_{x_1} f$ and $\laplace_{\ve{x}}$
   is used for the
   first- and second-order partial derivatives
   of a function $f$ in variable $x_1$, and
 the Laplacian in variable $\ve{x}$, respectively.
%
\section{The fundamental solution}\label{sec:fundamental}
%
In this section we investigate the fundamental solution $g$ that
solves a similar problem to \eqref{eq:Green_adj}
but posed in the domain $\R^3$:
   \begin{align}
     \LL^*_{\ve\xi} g(\ve{x};\ve\xi)
         =-\eps\laplace_{\ve\xi} g(\ve{x};\ve\xi)+2\alpha\,\pt_{\xi_1} g(\ve{x};\ve\xi)
        &=\delta(\ve{x}-\ve\xi)\quad\mbox{for}\; \ve\xi\in\R^3.\label{eq:Green_adj_const}
   \end{align}
   Using \cite{CK09,KSh87} (see also \cite{FK10_TR1, FK11_1}),
   a calculation shows that
   the fundamental solution $g$ is explicitly represented by
   \begin{gather}\label{eq:def_g0}
     g=g(\ve{x};\ve\xi)
      =\frac{1}{4\pi\eps^2}\, \frac{\E^{\alpha(\widehat\xi_{1,[x_1]}-\widehat r_{[x_1]})}}
                                   {\widehat r_{[x_1]}}
   \end{gather}
   with the scaled variables
   $\widehat\xi_{1,[x_1]}=\frac{\xi_1-x_1}{\eps}$,
   $\widehat\xi_2=\frac{\xi_2-x_2}{\eps}$, $\widehat\xi_3=\frac{\xi_3-x_3}{\eps}$
   and the scaled distance between $\ve{x}$ and $\ve{\xi}$ denoted by
   $\widehat r_{[x_1]}=\sqrt{\widehat\xi_{1,[x_1]}^2+\widehat\xi_2^2+\widehat\xi_3^2}$.
   We use the subindex $[x_1]$ in $\widehat\xi_{1,[x_1]}$ and $\widehat r_{[x_1]}$
   to highlight their dependence  on $x_1$
   as in many places $x_1$ will take different values;
   but when there is no ambiguity, we shall
   simply write $\widehat\xi_1$ and $\widehat r$.

   Next, we evaluate derivatives of $g$  of order one and two:
   \begin{subequations}
   \begin{align}
      \label{g_xi1} \pt_{\xi_1} g
                     &=\frac{\E^{\alpha(\widehat\xi_1-\widehat r)}}{4\pi\eps^3}
                     \,\widehat r^{-2}
                             \left[\alpha\bigl(\widehat r-\widehat \xi_1\bigr)-\frac{\widehat\xi_1}{\widehat r}\right],\\
      \label{g_xi2} \pt_{\xi_k} g
                     &=-\frac{\E^{\alpha(\widehat\xi_1-\widehat r)}}{4\pi\eps^3}
                             \,\bigl(\alpha\widehat r+1\bigr)\,\frac{\widehat\xi_k}{\widehat r^3},\quad k=2,\,3,\\
      \label{g2_xi1_xi1} \pt^2_{\xi_1} g
                           &=\frac{\E^{\alpha(\widehat\xi_1-\widehat r)}}{4\pi\eps^4}
                           \,\widehat r^{-3}\left[ \alpha^2
                           \bigl(\widehat r-\widehat\xi_1\bigr)^2
                                                -\alpha\bigl(\widehat r-\widehat\xi_1\bigr)
                              \Bigl(1+3\frac{\widehat\xi_1}{\widehat r}\Bigr)
                                                +\frac{3\widehat\xi_1^2-\widehat r^2}
                                                      {\widehat r^2}\right],\\
      \label{g2_xi2_xi2} \pt^2_{\xi_k} g
                           &=\frac{\E^{\alpha\bigl(\widehat\xi_1-\widehat r\bigr)}}{4\pi\eps^4}
                           \,\widehat r^{-3}\left[\alpha^2\widehat\xi_k^2+(\alpha\widehat r+1)
                                                  \frac{3\widehat\xi_k^2-\widehat r^2}
                                                       {\widehat r^2}\right],\quad k=2,\,3.
   \end{align}
   \end{subequations}

 \begin{lemma}\label{lem:fundamental}
Let $\Omega_*:=(0,1)\times\R^2$,
$\eps\in(0,c_0]$ for some sufficiently small constant $c_0$,
       and $0<\alpha\le C$.
      Then the function $g$ of \eqref{eq:def_g0}
      satisfies, for any $\ve{x}\in[2\eps,\frac34]\times\R^2$,
      the following bounds
      \begin{subequations}\label{g_bounds}
         \begin{align}
            \label{g_xi_L1}   \norm{\pt_{\xi_1} g(\ve{x};\cdot)}{1\,;\Omega_*}
                                 &\geq \C|\ln\eps|,\\
            \label{g_eta_L1}  \norm{\pt_{\xi_k} g(\ve{x};\cdot)}{1\,;\Omega_*}
                                 &\geq \C\eps^{-1/2},\quad k=2,3.\\
      \intertext{Furthermore, for any ball $B(\ve{x};\rho)$ of radius $\rho\le\frac18$, we have}
            \label{g_eta_ball} \norm{g(\ve{x};\cdot)}{1,1\,;\Omega_*\cap B(\ve{x};\rho)}
                                 &\geq \begin{cases}
                                 \C\rho/\eps, & \mbox{if~}\rho\le 2\eps,\\
                                          \C(\rho/\eps)^{1/2},&\mbox{otherwise},\\
                                       \end{cases}\\
%
         \norm{\pt^2_{\xi_1} g(\ve{x};\cdot)}{1\,;\Omega_*\setminus B(\ve{x};\rho)}
           &\geq \C\eps^{-1}\ln(2+\eps/\rho),
           \quad&&\mbox{if~}\rho\le c_1\eps,\label{g_xi2_L1}\\
         \norm{\pt^2_{\xi_k} g(\ve{x};\cdot)}{1\,;\Omega_*\setminus B(\ve{x};\rho)}
           &\geq \C\eps^{-1}(\ln(2+\eps/\rho)+|\ln\eps|),
           &&\mbox{if~}\rho\le{\textstyle\frac18},\label{g_eta2_L1}
      \end{align}
      where $\;k=2,3$, and $c_1$ is a sufficiently small positive constant.
      \end{subequations}
   \end{lemma}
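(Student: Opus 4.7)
My plan is to pass to dimensionless scaled coordinates $\widehat\xi_k = (\xi_k-x_k)/\eps$ ($k=1,2,3$), in which the fundamental solution takes the scale-free form $\tilde g = \E^{\alpha(\widehat\xi_1 - \widehat r)}/(4\pi \widehat r)$, related to $g$ by $g = \eps^{-2}\tilde g$. Each $j$-th $\ve\xi$-derivative of $g$ carries a factor $\eps^{-j}$, while $d\ve\xi = \eps^3\,d\widehat\xi_1\,d\widehat\xi_2\,d\widehat\xi_3$, so
\[
\norm{\pt^j_{\ve\xi} g(\ve x;\cdot)}{1;\Omega_*} = \eps^{1-j}\int_{\widehat\Omega_*}|\pt^j_{\widehat{\ve\xi}}\tilde g|\,d\widehat\xi_1\,d\widehat\xi_2\,d\widehat\xi_3,
\]
where the scaled domain $\widehat\Omega_* = (-x_1/\eps,(1-x_1)/\eps)\times\R^2$ contains the slab $[-2,\tfrac14\eps^{-1}]\times\R^2$ by virtue of $\ve x\in[2\eps,\tfrac34]\times\R^2$, and $B(\ve x,\rho)$ becomes the scaled ball of radius $R=\rho/\eps$ about the origin. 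Each bound of the lemma then reduces to an $\eps$-uniform dimensionless lower bound on an integral of $\tilde g$ or its derivatives over an appropriate sub-region.

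The principal mass-carrying region for the layer-type estimates is the \emph{downstream parabolic tube}
\[
T = \bigl\{(\widehat\xi_1,\widehat\xi_2,\widehat\xi_3): 1\le \widehat\xi_1\le \tfrac14\eps^{-1},\;\widehat\xi_2^2+\widehat\xi_3^2\le\kappa\widehat\xi_1\bigr\},
\]
for a sufficiently small $\kappa$. On $T$ the Taylor expansion $\widehat r - \widehat\xi_1 = (\widehat\xi_2^2+\widehat\xi_3^2)/(2\widehat\xi_1)+\mathcal{O}(\kappa^2)$ gives $\widehat r\asymp\widehat\xi_1$ and $\E^{\alpha(\widehat\xi_1-\widehat r)}\ge c'>0$. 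Inspecting the brackets in \eqref{g_xi1}--\eqref{g2_xi2_xi2} then yields the pointwise lower bounds
\[
|\pt_{\widehat\xi_1}\tilde g|\gtrsim\widehat\xi_1^{-2},\quad |\pt_{\widehat\xi_k}\tilde g|\gtrsim|\widehat\xi_k|\,\widehat\xi_1^{-2},\quad |\pt^2_{\widehat\xi_k}\tilde g|\gtrsim\widehat\xi_1^{-2},
\]
the third of which requires further shrinking $\kappa$ below $1/(8\alpha)$ so that the negative summand $(\alpha\widehat r+1)(3\widehat\xi_k^2-\widehat r^2)/\widehat r^2\asymp-\alpha\widehat\xi_1$ dominates the positive $\alpha^2\widehat\xi_k^2\le\alpha^2\kappa\widehat\xi_1$. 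Transverse polar integration in $(\widehat\xi_2,\widehat\xi_3)$ over the disc of area $\asymp\widehat\xi_1$, combined with the moment identity $\int|\widehat\xi_k|\,d\widehat\xi_2\,d\widehat\xi_3\asymp\widehat\xi_1^{3/2}$, followed by radial integration over $\widehat\xi_1\in[\max(1,R),\tfrac14\eps^{-1}]$, reduces matters to elementary one-dimensional integrals: $\int d\widehat\xi_1/\widehat\xi_1\asymp|\ln\eps|$ delivers \eqref{g_xi_L1} and the $|\ln\eps|$ summand of \eqref{g_eta2_L1}, while $\int d\widehat\xi_1/\sqrt{\widehat\xi_1}\asymp\eps^{-1/2}$ delivers \eqref{g_eta_L1}, and truncation of the upper limit to $R$ delivers the $\rho>2\eps$ case of \eqref{g_eta_ball}.

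The complementary near-source estimates are performed on a narrow cone $C=\{\widehat\xi_2^2+\widehat\xi_3^2<\delta^2\widehat\xi_1^2,\;\widehat\xi_1>0\}$ with $\delta$ small. On $C\cap B(\ve 0,1)$ one has $\widehat r\asymp\widehat\xi_1$, the exponential is bounded between positive constants, and the second-derivative brackets stabilise --- for instance $(3\widehat\xi_1^2-\widehat r^2)/\widehat r^2\to 2$ in \eqref{g2_xi1_xi1}, with an analogous non-zero limit for \eqref{g2_xi2_xi2} --- so $|\pt^2_{\widehat{\ve\xi}}\tilde g|\gtrsim\widehat r^{-3}$. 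Integration over $C\cap(B(\ve 0,1)\setminus B(\ve 0,R))$ yields $\int_R^1 r^{-3}\cdot r^2\,dr\asymp|\ln R|\asymp\ln(2+\eps/\rho)$, delivering \eqref{g_xi2_L1} (the hypothesis $\rho\le c_1\eps$ keeps $R\le c_1$ so this logarithm is the dominant term) and the $\ln(2+\eps/\rho)$ summand of \eqref{g_eta2_L1}. The $\rho\le 2\eps$ case of \eqref{g_eta_ball} follows from $|\pt_{\widehat\xi_k}\tilde g|\gtrsim\widehat r^{-2}$ on $C$: integration on $C\cap B(\ve 0,R)$ yields $\int_0^R r^{-2}\cdot r^2\,dr = R = \rho/\eps$.

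The main obstacle is sign control in the brackets of \eqref{g2_xi1_xi1}, \eqref{g2_xi2_xi2}: each is a sum of terms with competing signs, so the lower bound on the absolute value must be extracted on a sub-region where one term strictly dominates. The dominant term differs between the two regimes --- the positive $(3\widehat\xi_1^2-\widehat r^2)/\widehat r^2\to 2$ contribution in the near-source cone, versus the negative $-\alpha\widehat\xi_1$-type contribution in the parabolic tube --- so the geometry of each sub-region has to be fine-tuned per bound; in particular the tube-side bound on $|\pt^2_{\widehat\xi_k}\tilde g|$ forces a quantitative restriction on $\kappa$ tying it to $\alpha$ to keep $\alpha^2\widehat\xi_k^2$ from cancelling the dominant negative term. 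Once these pointwise bounds are in place the remaining integration is elementary.
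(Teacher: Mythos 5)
Your argument is essentially the paper's own proof: the same rescaling $\widehat{\ve\xi}=(\ve\xi-\ve x)/\eps$, the same two-region geometry (a downstream parabolic wake of transverse width $\lesssim\sqrt{\widehat\xi_1}$ for the $|\ln\eps|$, $\eps^{-1/2}$, $(\rho/\eps)^{1/2}$ and $|\ln\eps|$-summand bounds; near-source cones inside the unit scaled ball for the $\ln(2+\eps/\rho)$ and $\rho/\eps$ bounds), and the same elementary one-dimensional integrals at the end. The differences are cosmetic: you integrate over a narrow tube $\widehat\xi_2^2+\widehat\xi_3^2\le\kappa\widehat\xi_1$ on which the \emph{negative} summand of each bracket dominates, whereas the paper integrates over the full cone $\wom_2=\{|\widehat\xi_\perp|\le\widehat\xi_1\}$, substitutes $\psi_k=\widehat\xi_k/\sqrt{2\widehat\xi_1}$, and keeps the \emph{positive} part of the bracket in the outer part of the Gaussian cross-section --- both routes produce the same radial integrals $\int\widehat\xi_1^{-1}\,d\widehat\xi_1$ and $\int\widehat\xi_1^{-1/2}\,d\widehat\xi_1$. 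One pointwise claim does need repair: on your cone $C$ about the $\widehat\xi_1$-axis the derivative $\pt_{\xi_k}g$ is proportional to $\widehat\xi_k$ and vanishes on the plane $\widehat\xi_k=0$ through the axis of $C$, so ``$|\pt_{\widehat\xi_k}\tilde g|\gtrsim\widehat r^{-2}$ on $C$'' is false as stated; either restrict to the sub-cone $|\widehat\xi_k|\ge\tfrac12\delta\widehat r$ (the angular average of $|\widehat\xi_k|/\widehat r$ over $C$ is still $\asymp\delta$, so the integrated bound $\asymp R=\rho/\eps$ survives), or do as the paper does and take the cone about the $\xi_k$-axis, where $|\widehat\xi_k|\ge\widehat r/\sqrt2$ holds automatically. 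With that small fix your proof is complete to the same standard as the paper's (including inheriting the paper's own optimism about the $|\ln\eps|$ summand of \eqref{g_eta2_L1} when $\rho$ is bounded away from zero, where the wake integral from $\widehat\rho$ to $\tfrac14\eps^{-1}$ only yields a constant).
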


\begin{remark}
The statement of the above Lemma~\ref{lem:fundamental} is almost identical with the
statement of Theorem~\ref{thm:main_lower} with $G$ replaced by $g$ and
$\Omega_*$ replaced by $\Omega$.
   \end{remark}

   \begin{proof}
Throughout the proof, $x_1$ is fixed, so we employ the notation
$\widehat\xi_1:=\widehat\xi_{1,[x_1]}$ and
$\ve{\widehat\xi}:=(\widehat\xi_1,\widehat\xi_2,\widehat\xi_3)$.
First, we rewrite all integrals  that appear in (\ref{g_bounds})
 in variable
$\ve{\widehat\xi}\in \wom_*:=(-\frac{x_1}{\eps},\frac{1-x_1}{\eps})\times\R^2$
using $d\ve{\xi}=\eps^{3}d\ve{\widehat\xi}$.
Now it suffices to prove the desired lower bounds
on any sub-domain of $\wom_*$.
In particular, we employ the non-overlapping sub-domains $\wom_1$ and $\wom_2$ of
$\wom_*$:
$$
\wom_1:=B(\ve{0},1)\cap
\Bigl\{\widehat\xi_1^2\ge\widehat\xi_2^2+\widehat\xi_3^2\Bigr\},
\;\;
\wom_2:=\Bigl\{\max\{1,\sqrt{\widehat\xi_2^2+\widehat\xi_3^2}\}
                     \leq\widehat\xi_1\leq{\textstyle\frac{1}{4}}\eps^{-1}\Bigr\}
$$
(but other sub-domains of $B(\ve{0},1)$ similar to $\wom_1$ will be considered as well).
The notation $[v]^+:=\max\{v,0\}$ will be used
      for any function $v$.

(i) The bounds \eqref{g_xi_L1}, \eqref{g_eta_L1} will be obtained using
$\wom_2$.
Note that for $\ve{\widehat\xi}\in \wom_2$ one has
 $\widehat\xi_1\leq\widehat r\leq \sqrt{2}
\, \widehat\xi_1$.
 Introduce the new variables $\psi_k:=\widehat\xi_k/\sqrt{2\widehat\xi_1}$
  for $k=2,3$, so
      \[
d\widehat\xi_2\,d\widehat\xi_3=2\widehat\xi_1\,d\psi_2\,d\psi_3,
\qquad
\Omega_\Psi=\Bigl\{\psi_2^2+\psi_3^2\leq {\textstyle\frac{1}{2}}
\widehat\xi_1\Bigr\},
\qquad
\frac{ \widehat r-\widehat\xi_1}{\psi_2^2+\psi_3^2}\in[c_2,1],
 \]
 where $c_2:=2(1+\sqrt{2}
 )^{-1}$, and we used
 $\widehat r-\widehat\xi_1=(\widehat\xi_2^2+\widehat\xi_3^2)/(\widehat\xi_1+\widehat r)$ to get the final relation above.

    Now a calculation using \eqref{g_xi1} yields
the first desired bound \eqref{g_xi_L1} as follows:
      \begin{align*}
        \norm{\pt_{\xi_1}g}{1,\Omega_*}\!
         &\geq \C
               \iiint_{\wom_{2}}
               \E^{\alpha(\widehat\xi_1-\widehat r)}\,
                  \widehat \xi_1^{-2}
                  \Bigl[\alpha\bigl(\widehat r-\widehat\xi_1\bigr)
                        -\frac{\widehat\xi_1}{\widehat r}\,\Bigr]^+
                  d\ve{\widehat\xi}\\
         &\geq \C \int_1^{\frac14{\eps}^{-1}}\!\!\!\!\!\!\widehat\xi_1^{-1}\,
         d\widehat\xi_1\,
       \iint_{\Omega_\Psi}
                              \E^{-\alpha(\psi_2^2+\psi_3^2)}
                              \Bigl[\alpha c_2(\psi_2^2+\psi_3^2)-1\Bigr]^+
                                     \,d\psi_2\,d\psi_3\\
          &\geq \C|\ln\eps|.
      \end{align*}
A similar calculation using \eqref{g_xi2} with $k=2,3$
 yields \eqref{g_eta_L1}; indeed,
      \begin{align*}
        \norm{\pt_{\xi_k}g}{1,\Omega_*}\!
         &\geq \C\iiint_{\wom_2} \E^{\alpha(\widehat\xi_1-\widehat r)}
\bigl(\alpha\widehat r+1\bigr)\,\widehat \xi_1^{-3}|\widehat\xi_k|\,
                             d\ve{\widehat\xi}\\
         &\geq \C
               \int_1^{\frac14{\eps}^{-1}}\!\!\!\!\!\!\!d\widehat\xi_1
               \,\bigl(\alpha\widehat \xi_1+1\bigr)\,\widehat\xi_1^{-3/2}
                \iint_{\Omega_\Psi} \!\!\!\E^{-\alpha(\psi_2^2+\psi_3^2)}
                             |\psi_k|\,
                             d\psi_2\,d\psi_3 
          \geq \C\eps^{-1/2}.
      \end{align*}

      (ii) To show \eqref{g_eta_ball} for $\rho\le 2\eps$,
      we note that
      $\norm{g}{1,1; B(\ve{x},\rho)}\ge \norm{\pt_{\xi_2}g}{1; B(\ve{x},\rho)}$
      so set $\widehat\rho:=\rho/\eps$ and
      consider  the sub-domain
      $\wom_3:=B(\ve{0},\widehat\rho)\cap
\Bigl\{\widehat\xi_2^2\ge\widehat\xi_1^2+\widehat\xi_3^2\Bigr\}$.
Note that in this sub-domain,
$\E^{\alpha(\widehat\xi_1-\widehat r)}\ge c$ and
$\widehat\xi_2\ge\widehat r/\sqrt{2}$ so (\ref{g_xi2}) yields
      \begin{align*}
        \norm{\pt_{\xi_2}g}{1,\Omega_*\cap B(\ve{x},\rho)}\!
         &\geq \C\iiint_{\wom_3}
\widehat r^{-2}\,
                             d\ve{\widehat\xi}\geq \C
               \int_0^{\widehat\rho}\!d\widehat r
         \geq \C\widehat\rho=\C\rho/\eps,
      \end{align*}
which immediately implies \eqref{g_eta_ball} for $\rho\le 2\eps$.

Next, for $\rho\in [2\eps,\frac18]$ consider $\pt_{\xi_2}g$ in the sub-domain
$\wom_2\cap B(\ve{0},\widehat\rho)$. Imitating the calculation in part (i),
one gets
      \begin{align*}
        \norm{\pt_{\xi_2}g}{1,\Omega_*\cap B(\ve{x},\rho)}\!
         &\geq \C\iiint_{\wom_2\cap B(\ve{0},\widehat\rho)}\! \E^{\alpha(\widehat\xi_1-\widehat r)}
\bigl(\alpha\widehat r+1\bigr)\,\widehat \xi_1^{-3}|\widehat\xi_2|\,
                             d\ve{\widehat\xi}\\
         &\geq \C
               \int_1^{\widehat\rho}\!\!\!d\widehat\xi_1
               \,\bigl(\alpha\widehat \xi_1+1\bigr)\,\widehat\xi_1^{-3/2}
                \iint_{\Omega_\Psi} \!\!\!\E^{-\alpha(\psi_2^2+\psi_3^2)}
                             |\psi_2|\,
                             d\psi_2\,d\psi_3 \\
          &\geq \C\widehat\rho^{-1/2},
      \end{align*}
      which completes the proof of \eqref{g_eta_ball} for $\rho\le\frac18$.

(iii)
To obtain (\ref{g_xi2_L1}), we use the sub-domain
$\wom_1\backslash B(\ve{0},\widehat\rho)$, where $\widehat\rho:=\rho/\eps$.
Note that for $\ve{\widehat\xi}\in \wom_1$ one has
$\E^{\alpha(\widehat\xi_1-\widehat r)}\ge c$ and
$\frac{3\widehat\xi_1^2-\widehat r^2}{\widehat r^2}\ge1$.
So using \eqref{g2_xi1_xi1}, one gets
      \begin{align*}
        \norm{\pt_{\xi_1}^2g}{1,\Omega_*\setminus B(\ve{x},\rho)}
         &\geq \C\eps^{-1}
            \iiint_{\wom_1\backslash B(\ve{0},\widehat\rho)}
               \widehat r^{-3}\,
               \Bigl[0-4\alpha\widehat r+1\Bigr]^+
               d\ve{\widehat\xi}\\
         &\geq \C\eps^{-1}
            \int_{\widehat\rho}^{\min\{1, \frac1{8\alpha}\}}
               \widehat r^{-1}\,
               d\widehat r
          \geq \C\eps^{-1}\ln(2+\rho/\eps).
      \end{align*}
So we have shown (\ref{g_xi2_L1}) for
$\rho\le c_1\eps$ with $c_1:=\frac12\,\min\{1, \frac1{8\alpha}\}$.

(iv)
In a similar manner as in part (iii), using \eqref{g2_xi2_xi2}, one can show that
$\norm{\pt_{\xi_k}^2g}{1,\Omega\setminus B(\ve{x},\rho)}\geq \C\eps^{-1}\ln(2+\rho/\eps)$
for $k=2,3$ and $\rho\le \frac12\eps$.
Note that now we use the sub-domain
$B(\ve{0},1)\cap
\Bigl\{\widehat\xi_k^2\ge\widehat\xi_1^2+\widehat\xi_j^2\Bigr\}$
instead of $\wom_1$, with $j=3$ for $k=2$ and $j=2$ for $k=3$.

Consequently, to obtain (\ref{g_eta2_L1}) for any $\rho\le \frac18$, it remains to
show that
$\norm{\pt_{\xi_k}^2g}{1,\Omega_*\setminus B(\ve{x},\rho)}\geq c\eps^{-1}|\ln\eps|$.
For this, consider \eqref{g2_xi2_xi2} in
$\wom_2\backslash B(\ve{0},\widehat\rho)$.
Combining the observations made in part (i) with
$\alpha\widehat r+1\le (\alpha\sqrt{2}+1)\widehat\xi_1$
and $\frac{3\widehat\xi_k^2-\widehat r^2}{\widehat r^2}\ge-1$,
yields
      \begin{align*}
        \norm{\pt_{\xi_k}^2g}{1,\Omega_*\setminus B(\ve{x},\rho)}
        \hspace{-2.2cm}\\
         &\geq \C\eps^{-1}
            \iiint_{\wom_2\backslash B(\ve{0},\widehat\rho)}
               \E^{\alpha(\widehat\xi_1-\widehat r)}\,
                  \widehat \xi_1^{-3}
                  \Bigl[2\alpha^2{\widehat\xi_1}{\psi_k^2}
                  -(\alpha\sqrt{2}+1)\widehat\xi_1
\,\Bigr]^+
                  d\ve{\widehat\xi}\\
         &\geq \C \int_{\max\{1,\widehat\rho\}}^{\frac14{\eps}^{-1}}\!
         \widehat\xi_1^{-1}\,
         d\widehat\xi_1\,
       \iint_{\Omega_\Psi}
                              \E^{-\alpha(\psi_2^2+\psi_3^2)}
                              \Bigl[{2}\alpha^2{\psi_k^2}
                  -(\alpha\sqrt{2}+1)\Bigr]^+
                                     d\psi_2\,d\psi_3\\
          &\geq \C\eps^{-1}|\ln\eps|.
      \end{align*}
Here we also used $\widehat\rho\le\frac18\eps^{-1}$.
So we have proved the final desired assertion~(\ref{g_eta2_L1}).
\qed
\end{proof}

\section{Approximation of the Green's function
and proof of Theorem~\ref{thm:main_lower} for the domain $\Omega_*=(0,1)\times\R^2$}\label{sec:approx}
   To approximate the Green's function, we use
   the fundamental solution $g$ of Section~\ref{sec:fundamental} and
   the cut-off function $\omega$, defined by
     \begin{equation}\label{omega_def}
         \omega(t) \in C^2(0,1),
         \quad\omega(t)=0\;\;\mbox{for }t\leq \frac{1}{6},
         \quad\omega(t)=1\;\;\mbox{for }t\geq \frac{1}{3},
      \end{equation}
   so 
   $\omega'(x)=0$ for $x=0,1$.
   Then we set
   \[
      \bar {G}(\ve{x};\ve\xi)\!
        =\!\frac{\E^{\alpha\widehat\xi_{1,[ x_1]}}}{4\pi\eps^2}
           \left\{
              \left[ \frac{\E^{-\alpha\widehat r_{[  x_1]}}}{\widehat r_{[  x_1]}}
                    \!-\!\frac{\E^{-\alpha\widehat r_{[ -x_1]}}}{\widehat r_{[ -x_1]}}\right]
             \!-\!\left[ \frac{\E^{-\alpha\widehat r_{[2-x_1]}}}{\widehat r_{[2-x_1]}}
                    \!-\!\frac{\E^{-\alpha\widehat r_{[2+x_1]}}}{\widehat r_{[2+x_1]}}\right]\omega(\xi_1)\!
           \right\},
   \]
   which approximates the Green's function $G$ associated with the domain
   $\Omega_*=(0,1)\times\R^2$; in particular, it
   satisfies the boundary condition
   $\bar {G}\bigr|_{\pt\Omega_*}=\bar {G}\bigr|_{\xi_1=0,1}=0$.
%
%
   Using the notations
%
   \[
      g_{[d]}
        := g(d,x_2,x_3;\ve\xi),\quad
      \lambda^{\pm}:=\E^{2\alpha(1\pm x_1)/{\eps}},\quad
      p:=\E^{-2\alpha x_1/{\eps}},
   \]
%
   we can rewrite the definition of $\bar G$ as
   \begin{align}
      \bar{G}(\ve{x};\ve\xi)
       &= \left[g_{[x_1]}-p\, g_{[-x_1]}\right]
          -\left[\lambda^- g_{[2-x_1]}-p\,\lambda^{\!+} g_{[2+x_1]}\right]\omega(\xi_1).
          \label{bar_G_g}
   \end{align}
   We now present a version of \cite[Lemma 4.2]{FK10_TR1}, which gives
   certain upper bounds for $g$
   that involve a weight function $\lambda$ of type $\lambda^\pm$.
   \begin{lemma}\label{lem:g_lmbd_bounds}
      Let $\ve{x}\in[1+\eps,3]\times\R^2$.
      Then for the function $g$ of \eqref{eq:def_g0}
      and the weight $\lambda:=\E^{2\alpha(x_1-1)/\eps}$
      one has the following bounds
      \begin{subequations}\label{g_bounds_lmbd_}
      \begin{align}
         \norm{(\lambda g)(\ve{x};\cdot)}{1\,;\Omega_*}
            &\leq C\eps,\label{g_L1_mu}\\
         \norm{(\lambda\,\pt_{\xi_k} g)(\ve{x};\cdot)}{1\,;\Omega_*}
            &\leq C, \qquad\;\; k=1,2,3,\label{g_L1_mu2}\\
            \label{g_L1_mu6}
          \norm{(\lambda\,\pt^2_{\xi_k} g)(\ve{x};\cdot)}{1\,;\Omega_*}
               &\leq C\eps^{-1}, \quad k=1,2,3,
      \end{align}
      while for any ball $B(\ve{x'};\rho)$ of radius $\rho\le \frac18$
      centred at $\ve{x'}\in[0,\frac34]\times\R^2$, one has
      \begin{gather}\label{g_L1_mu888}
          \norm{(\lambda\, g)(\ve{x};\cdot)}{1,1\,;\Omega_*\cap B(\ve{x'};\rho)}
               \leq C\min\{\rho/\eps,(\rho/\eps)^{1/2}\}\,\E^{-\frac18\alpha/\eps}.
      \end{gather}
      \end{subequations}
   \end{lemma}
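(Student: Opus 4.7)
The plan is to pass to the scaled variable $\ve{\hat\xi}$, use the explicit formulas \eqref{eq:def_g0}, \eqref{g_xi1}--\eqref{g2_xi2_xi2}, and evaluate each transverse integral in polar coordinates. Under the hypothesis $x_1\ge 1+\eps$ the scaled domain $\hat\Omega_*=(-x_1/\eps,(1-x_1)/\eps)\times\R^2$ lies in $\{\hat\xi_1\le -1\}$, so throughout it $\hat\xi_1<0$ and $\hat r\ge|\hat\xi_1|$; consequently $\E^{\alpha(\hat\xi_1-\hat r)}\le \E^{-2\alpha|\hat\xi_1|}$, and the weight $\lambda=\E^{2\alpha(x_1-1)/\eps}$ exactly compensates the worst case of this exponential at the upper edge $\hat\xi_1=-(x_1-1)/\eps$ of the $\hat\xi_1$-interval.

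For \eqref{g_L1_mu}, I would pass to polar coordinates $\hat R=\sqrt{\hat\xi_2^2+\hat\xi_3^2}$ in the transverse plane and substitute $u=\hat r$ to evaluate
\[
\int_{\R^2}\frac{\E^{-\alpha\hat r}}{\hat r}\,d\hat\xi_2\,d\hat\xi_3=\frac{2\pi}{\alpha}\,\E^{-\alpha|\hat\xi_1|}.
\]
Combined with $\E^{\alpha\hat\xi_1}=\E^{-\alpha|\hat\xi_1|}$ (valid since $\hat\xi_1<0$), the remaining one-dimensional integral in $\hat\xi_1\in[-x_1/\eps,-(x_1-1)/\eps]$ is elementary; multiplication by the Jacobian $\eps^3$ and by $\lambda$ produces $\ord{\eps\,(1-\E^{-2\alpha/\eps})}=\ord{\eps}$, which is \eqref{g_L1_mu}. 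For \eqref{g_L1_mu2} and \eqref{g_L1_mu6} the same reduction applies to \eqref{g_xi1}--\eqref{g2_xi2_xi2}: each polynomial factor appearing in the bracketed part, when rewritten in terms of $u=\hat r$, is at most polynomial in $u\ge|\hat\xi_1|$ and integrates against $\E^{-\alpha u}$ and the residual $\E^{-\alpha|\hat\xi_1|}$ to an $\eps$-independent constant after multiplication by $\lambda$. The per-derivative $\eps^{-1}$ already present in the prefactors of \eqref{g_xi1}--\eqref{g2_xi2_xi2} then produces the respective bounds $C$ and $C\eps^{-1}$.

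For the localised estimate \eqref{g_L1_mu888} the new ingredient is the geometric observation that, since $x_1\ge 1+\eps$, $x_1'\le\tfrac34$ and $\rho\le\tfrac18$, every $\ve\xi\in B(\ve{x'};\rho)$ satisfies $x_1-\xi_1\ge\tfrac18$, hence $|\hat\xi_1|\ge (x_1-1)/\eps+1/(8\eps)$ and
\[
\lambda\,\E^{\alpha(\hat\xi_1-\hat r)}\le\lambda\,\E^{-2\alpha|\hat\xi_1|}\le\E^{-\alpha/(4\eps)}
\]
uniformly on the ball. A second consequence is $\hat r\ge 1/(8\eps)$ on the ball, so the factors $\hat r^{-1}$, $\hat r^{-2}$ appearing in \eqref{eq:def_g0}--\eqref{g_xi2} are bounded there by powers of $\eps$. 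Combining the two, all of $|\lambda g|$ and $|\lambda\,\pt_{\xi_k}g|$ carry the uniform factor $\E^{-\alpha/(4\eps)}$ on the ball, and any stray negative power of $\eps$ is absorbed into one half of this decay, leaving the claimed $\E^{-\alpha/(8\eps)}$. The remaining geometric factor $\min\{\rho/\eps,(\rho/\eps)^{1/2}\}$ is obtained exactly as in the proof of Lemma~\ref{lem:fundamental}(ii), but as an upper rather than a lower estimate: the two regimes $\rho\le 2\eps$ and $\rho>2\eps$ arise from controlling the integral over the scaled ball $B(\ve{\hat x}';\rho/\eps)$ either directly by its volume or by integrating out one transverse direction first.

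The main obstacle is the polar-coordinate bookkeeping for the second-derivative formulas \eqref{g2_xi1_xi1} and \eqref{g2_xi2_xi2}: their brackets contain mixed-sign terms of degree up to two in both $\hat\xi_k/\hat r$ and $\alpha\hat r$, and one must verify term by term (no cancellation is needed) that the worst polynomial growth in $u=\hat r$ is only quadratic---which $\E^{-\alpha u}$ comfortably beats---and that the residual polynomial factors in $|\hat\xi_1|$ integrate to an $\eps$-independent constant against $\E^{-2\alpha|\hat\xi_1|}$.
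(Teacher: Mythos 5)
Your argument is correct, but it takes a genuinely more self-contained route than the paper, which for most of this lemma simply cites earlier work: the bounds \eqref{g_L1_mu} and \eqref{g_L1_mu2} are quoted from \cite[Lemma~4.2]{FK10_TR1}, and \eqref{g_L1_mu6} is obtained not by recomputation but by observing that the hypothesis $x_1\ge 1+\eps$ forces $\widehat r>1$ throughout $\wom_*$, so that $\Omega_*\setminus B(\ve{x},\eps)=\Omega_*$ and the logarithmic factor $\ln(2+\eps/\rho)$ in the cited second-derivative bound degenerates to $\ln 3$. Only \eqref{g_L1_mu888} is proved directly in the paper, and there your argument essentially coincides with it: the pointwise bound $\lambda\,(|g|+|\pt_{\xi_k}g|)\le C\eps^{-3}\E^{-\frac14\alpha/\eps}$ on the ball, coming from $|\widehat\xi_1|\ge (x_1-1)/\eps+\frac18\eps^{-1}$, followed by trading part of the exponential for the geometric factor. (The paper does this last step slightly more directly than you propose: it bounds the integral by the volume, $C(\rho/\eps)^3\E^{-\frac14\alpha/\eps}$, and then uses $(\rho/\eps)^{3-p}\le C\E^{\frac18\alpha/\eps}$ for $p=1,\frac12$, rather than re-running the two-regime argument of Lemma~\ref{lem:fundamental}(ii).) Your explicit evaluation of the transverse integral $\int_{\R^2}\E^{-\alpha\widehat r}\,\widehat r^{-1}\,d\widehat\xi_2\,d\widehat\xi_3=\frac{2\pi}{\alpha}\E^{-\alpha|\widehat\xi_1|}$ buys independence from the external reference at the cost of the term-by-term bookkeeping you describe.

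One step of that bookkeeping deserves tightening. You assert that ``the residual polynomial factors in $|\widehat\xi_1|$ integrate to an $\eps$-independent constant against $\E^{-2\alpha|\widehat\xi_1|}$'' after multiplication by $\lambda$. If a genuine factor $|\widehat\xi_1|^m$ with $m\ge1$ survived the transverse integration, this would be false: the $\widehat\xi_1$-integral runs over $[(x_1-1)/\eps,\,x_1/\eps]$, so $\int\E^{-2\alpha t}t^m\,dt$ is of order $\E^{-2\alpha(x_1-1)/\eps}\bigl((x_1-1)/\eps\bigr)^m$, and $\lambda$ cancels only the exponential, leaving $\bigl((x_1-1)/\eps\bigr)^m$, which is unbounded when $x_1$ is bounded away from $1$. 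The argument is saved because no such factor actually survives: using $|\widehat\xi_1|,|\widehat\xi_k|\le\widehat r$ and $\widehat r\ge1$, each bracket in \eqref{g_xi1}--\eqref{g2_xi2_xi2} is bounded by $C\widehat r^{\,m}$ ($m$ the order of the derivative), so every integrand is at most $C\eps^{-2-m}\E^{\alpha(\widehat\xi_1-\widehat r)}\,\widehat r^{-1}$ and the transverse integral is exactly of order $\E^{-2\alpha|\widehat\xi_1|}$ with no polynomial residue. Make this reduction explicit rather than deferring it to an unspecified term-by-term check.
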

   \begin{proof}
   The bounds (\ref{g_L1_mu}), (\ref{g_L1_mu2}) appear in \cite[Lemma 4.2]{FK10_TR1}.
     The estimate \eqref{g_L1_mu6} is slightly sharper compared to the similar bound in
     \cite[Lemma 4.2]{FK10_TR1}; the latter
     is for the domain $\Omega_*\setminus B(\ve{x},\rho)$ and involves the logarithmic term
     $\ln(2+\eps/\rho)$ as it is valid for $x_1\in[1,3]$.
In the above Lemma~\ref{lem:g_lmbd_bounds} we make a stronger assumption that
$x_1\in[1+\eps,3]$, under which
$\Omega_*\setminus B(\ve{x},\eps)=\Omega_*$ and
the logarithmic term becomes $\ln 3$ so can be dropped.

To obtain the final desired bound (\ref{g_L1_mu888}), note that
$x_1\ge 1+\eps$ implies that $\widehat r >1$ in $\Omega_*$ so
\eqref{eq:def_g0}, \eqref{g_xi1} and \eqref{g_xi2} yield
$\lambda\, (|g|+|\pt_{\xi_k} g|)\le C\eps^{-3}\lambda\, \E^{\alpha
(\widehat\xi_1-\widehat r)}$
for $k=1,2,3$.
Here
$\widehat\xi_1-\widehat r=-(|\widehat\xi_1|+\widehat r)\ge -2|\widehat\xi_1|$
so $\lambda\, \E^{\alpha
(\widehat\xi_1-\widehat r)}\le \E^{2\alpha
[(x_1-1)/\eps-|\widehat\xi_1|]}$.
Note also that
in $B(\ve{x'};\rho)$ we have $\xi_1\le 1-\frac18$ so
$|\widehat\xi_1|=\frac{x_1-\xi_1}\eps\ge \frac{x_1-1}\eps+\frac18\eps^{-1}$.
Consequently,
$\lambda\, (|g|+|\pt_{\xi_k} g|)\le C\eps^{-3}\lambda\, \E^{-\frac14\alpha/\eps}$,
so
$\norm{(\lambda\, g)(\ve{x};\cdot)}{1,1\,;\Omega_*\cap B(\ve{x'};\rho)}
               \leq C(\rho/\eps)^{3}\,\E^{-\frac14\alpha/\eps}$.
Combining this with $(\rho/\eps)^{3}\le C(\rho/\eps)^{p}\,\E^{\frac18\alpha/\eps}$
for $p=1,\frac12$
yields (\ref{g_L1_mu888}).
  \qed
   \end{proof}

   \begin{lemma}\label{lem_barG}
Let $\Omega_*:=(0,1)\times\R^2$,
$\eps\in(0,c_0]$ for some sufficiently small constant $c_0$,
       and $0<\alpha\le C$.
      Then the function $\bar G$ of \eqref{bar_G_g}
      satisfies, for any $\ve{x}\in[1/4,3/4]\times\R^2$,
      all the  bounds~(\ref{g_bounds}) of Lemma~\ref{lem:fundamental}
      with $g$ replaced by $\bar G$.
   \end{lemma}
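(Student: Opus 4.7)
The plan is to decompose
\[
\bar G = g_{[x_1]} + R,\qquad R := -p\,g_{[-x_1]} - \bigl[\lambda^- g_{[2-x_1]} - p\lambda^+ g_{[2+x_1]}\bigr]\omega(\xi_1),
\]
and invoke the triangle inequality $\|D^j\bar G\|_1\ge \|D^j g_{[x_1]}\|_1-\|D^j R\|_1$. Since every lower bound of \eqref{g_bounds} has already been established for $g_{[x_1]}$ in Lemma~\ref{lem:fundamental}, the entire proof reduces to showing that each of the three summands of $R$ satisfies an upper bound of strictly lower order than the corresponding entry in \eqref{g_bounds}.

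The two ``right-reflection'' contributions $\lambda^- g_{[2-x_1]}\omega(\xi_1)$ and $p\lambda^+ g_{[2+x_1]}\omega(\xi_1)$ fall directly under Lemma~\ref{lem:g_lmbd_bounds}: for the first, the source $(2-x_1,x_2,x_3)$ lies in $[\tfrac54,\tfrac74]\times\R^2\subset[1+\eps,3]\times\R^2$ and the weight is exactly $\lambda=\lambda^-$; for the second, the source is in $[\tfrac94,\tfrac{11}{4}]\times\R^2$ with weight $\lambda^+$, supplemented by the exponentially small prefactor $p\le\E^{-\alpha/(2\eps)}$. A Leibniz expansion handles the bounded cut-off $\omega$ and produces only lower-order summands already controlled by \eqref{g_L1_mu}--\eqref{g_L1_mu888}. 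Altogether these two contributions amount to at most $C\eps$, $C$, and $C\eps^{-1}$ for the function, first, and second derivatives on $\Omega_*$, and to at most $C\min\{\rho/\eps,(\rho/\eps)^{1/2}\}\,\E^{-\alpha/(8\eps)}$ inside $B(\ve x,\rho)$.

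The remaining term $p\,g_{[-x_1]}$ is the main obstacle, since its source $(-x_1,x_2,x_3)$ lies to the \emph{left} of $\Omega_*$ and is thus not covered by Lemma~\ref{lem:g_lmbd_bounds}. I would handle it by a direct calculation. Throughout $\Omega_*$ one has $\widehat\xi_{1,[-x_1]}=(\xi_1+x_1)/\eps>0$, and the identity $\widehat r_{[-x_1]}-\widehat\xi_{1,[-x_1]}=(\widehat\xi_2^2+\widehat\xi_3^2)/(\widehat r_{[-x_1]}+\widehat\xi_{1,[-x_1]})$ combined with $p=\E^{-2\alpha x_1/\eps}$ yields the pointwise identity
\[
p\,\E^{\alpha(\widehat\xi_{1,[-x_1]}-\widehat r_{[-x_1]})}
   =\E^{-2\alpha x_1/\eps}\,\E^{-\alpha u}
   \le\E^{-\alpha/(2\eps)}\,\E^{-\alpha u},
\]
with $u:=\widehat r_{[-x_1]}-\widehat\xi_{1,[-x_1]}\ge 0$. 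Rescaling $d\ve\xi=\eps^3d\widehat{\ve\xi}$ and passing to cylindrical variables $(s,t,\theta)$, $s:=\widehat\xi_{1,[-x_1]}$, $t:=(\widehat\xi_2^2+\widehat\xi_3^2)^{1/2}$, reduces the $L^1$ norms of $p\,g_{[-x_1]}$ and of its derivatives (available explicitly from \eqref{g_xi1}--\eqref{g2_xi2_xi2}) to convergent integrals in $u$ against $\E^{-\alpha u}$, whence $\|p\,\pt^j g_{[-x_1]}\|_{1;\Omega_*}\le C\,\E^{-\alpha/(4\eps)}$ for $j=0,1,2$ (absorbing polynomial factors of $1/\eps$ into the exponent).

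It remains to verify that these correction estimates are strictly dominated by the lower bounds of Lemma~\ref{lem:fundamental}. For \eqref{g_xi_L1}, \eqref{g_eta_L1} and \eqref{g_eta2_L1}, the lower side contains a diverging factor ($|\ln\eps|$, $\eps^{-1/2}$, or $\eps^{-1}|\ln\eps|$) which easily dominates the $O(1)$ or $O(\eps^{-1})$ correction once $\eps\le c_0$. For \eqref{g_xi2_L1} the correction is only $O(\eps^{-1})$, forcing the restriction $\rho\le c_1\eps$ with $c_1$ small enough that $\ln(2+1/c_1)$ exceeds the implicit correction constant. The most delicate comparison is \eqref{g_eta_ball}: the pointwise bound $|R|+|\nabla R|\le C\eps^{-2}\E^{-\alpha/(8\eps)}$ in $B(\ve x,\tfrac18)$ combined with $|B(\ve x,\rho)|\sim\rho^3$ gives $\|R\|_{1,1;\Omega_*\cap B(\ve x,\rho)}\le C\rho^3\eps^{-2}\E^{-\alpha/(8\eps)}$, and elementary algebra shows this is absorbed by both the $c\rho/\eps$ and $c(\rho/\eps)^{1/2}$ regimes of \eqref{g_eta_ball} once $\eps\le c_0$ is sufficiently small.
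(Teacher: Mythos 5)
Your proposal is correct and follows essentially the same route as the paper: the same triangle-inequality decomposition of $\bar G$ into $g_{[x_1]}$ plus correction terms, the lower bounds of Lemma~\ref{lem:fundamental} for the leading term, Lemma~\ref{lem:g_lmbd_bounds} for the weighted reflections $\lambda^\pm g_{[2\pm x_1]}$, and the same case-by-case verification that the corrections are dominated (including the restriction $\rho\le c_1\eps$ for the $\pt^2_{\xi_1}$ bound and the smallness of $\eps$ elsewhere). The only divergence is your treatment of $p\,g_{[-x_1]}$, where you give a direct, self-contained estimate exploiting $\widehat r_{[-x_1]}\ge x_1/\eps$ and $p\le\E^{-\alpha/(2\eps)}$, whereas the paper simply invokes the known upper bounds of type \eqref{eq_theorem} for $g_{[-x_1]}$ on $\Omega_*$ and lets the exponentially small factor $p$ absorb them; both arguments are valid.
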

   \begin{proof}
      Let $D$ be any of the first- or second-order
      differential operators that appear in (\ref{g_bounds}).
      Now for any $\Omega_*'\subset\Omega_*$,
      the representation \eqref{bar_G_g} yields
      \begin{align}
        \norm{D\bar G(\ve{x};\cdot)}{1,\Omega_*'}
         &= \norm{Dg_{[x_1]}-p\, Dg_{[-x_1]}-\lambda^- Dg_{[2-x_1]}+p\,\lambda^{\!+} Dg_{[2+x_1]}}{1,\Omega_*'}
         \notag\\
         &\geq     \norm{Dg_{[x_1]}}{1,\Omega_*'}
                     -p\,\norm{Dg_{[-x_1]}}{1,\Omega_*'}
                     -   \norm{\lambda^-Dg_{[2-x_1]}}{1,\Omega_*'}
                     \notag\\&\hspace*{2.3cm}\,
                     {}-p\,\norm{\lambda^{\!+}Dg_{[2+x_1]}}{1,\Omega_*'}.
                     \label{D_terms}
      \end{align}
      For the first term that involves $g_{[x_1]}$, we use the corresponding lower bound from
      Lemma~\ref{lem:fundamental}
      so it remains to show that this bound will dominate the remaining three terms.
For the second term we note that
$g_{[-x_1]}$ satisfies
the upper bounds of type (\ref{eq_theorem}) with $\Omega$ replaced by $\Omega_*$
\cite{FK10_TR1}. Now, as $x_1\ge\frac14$ implies that $p\le \E^{-\frac12\alpha/\eps}$,
the second term will be dominated by the first term if
$\eps$ is sufficiently small
(i.e. if the constant $c_0$
is sufficiently small).

To estimate the terms that involve $\lambda^\pm g_{[2\pm x_1]}$
in (\ref{D_terms}), we use the bounds~(\ref{g_bounds_lmbd_})
of Lemma~\ref{lem:g_lmbd_bounds}.
In particular, by (\ref{g_xi2_L1}), (\ref{g_L1_mu6}),
\begin{align*}
{\textstyle\frac13}\norm{\pt^2_{\xi_1} g(\ve{x};\cdot)}{1\,;\Omega_*\setminus B(\ve{x};\rho)}
-\norm{\lambda^\pm \pt^2_{\xi_1}g_{[2\pm x_1]}}{1,\Omega_*\setminus B(\ve{x};\rho)}
\hspace{-2.3cm}\\
&\ge\eps^{-1}\bigl[\C\ln(2+\eps/\rho)-C\bigr]\\
&\ge\C\eps^{-1}\ln(2+\eps/\rho)
\end{align*}
for $\rho\le c_1\eps$ if $c_1$ is sufficiently small,
so we get a version of (\ref{g_xi2_L1}) for $\bar G$.
Similarly, by (\ref{g_eta2_L1}), (\ref{g_L1_mu6}), for $k=2,3$ one gets
\begin{align*}
{\textstyle\frac13}\norm{\pt^2_{\xi_k} g(\ve{x};\cdot)}{1\,;\Omega_*\setminus B(\ve{x};\rho)}
-\norm{\lambda^\pm \pt^2_{\xi_k}g_{[2\pm x_1]}}{1,\Omega_*\setminus B(\ve{x};\rho)}
\hspace{-3.3cm}\\
&\ge\eps^{-1}\bigl[\C\ln(2+\eps/\rho)+\C|\ln\eps|-C\bigr]\\
&\ge\C\eps^{-1}(\ln(2+\eps/\rho)+|\ln\eps|)
\end{align*}
provided that $\eps$ is sufficiently small.
This yields a version of (\ref{g_eta2_L1}) for $\bar G$.
Finally,
 \eqref{g_L1_mu888} implies that
\begin{align*}
          \norm{(\lambda^\pm\, g_{[2\pm x_1]})(\ve{x};\cdot)}{1,1\,;\Omega_*\cap B(\ve{x};\rho)}
               &\leq C\min\{\rho/\eps,(\rho/\eps)^{1/2}\}\,\E^{-\frac18\alpha/\eps}\\
               &\leq
               {\textstyle\frac13}\C\min\{\rho/\eps,(\rho/\eps)^{1/2}\}
\end{align*}
for any arbitrarily small $\C$ provided that $\eps$ is sufficiently small.
This observation yields a version of (\ref{g_eta_ball}) for $\bar G$.
      \qed
   \end{proof}

   \textit{Proof of Theorem~\ref{thm:main_lower} for the domain $\Omega_*=(0,1)\times\R^2$.}
As, by Lemma~\ref{lem_barG}, the approximation $\bar G$ satisfies the bounds that
we need to prove for $G$, it suffices to estimate
 the function
   $ v=\bar G- G $,
   which satisfies the differential equation
   \begin{equation}\label{eq:pde_v}
     \LL_{\ve{\xi}}^*\,v(\ve{x};\ve{\xi})
       =-\eps\laplace_{\ve{\xi}}v(\ve{x};\ve{\xi})
        +2\alpha\,\pt_{\xi_1} v(\ve{x};\ve{\xi})=\phi(\ve{x};\ve{\xi})
        \quad\mbox{for}\;\;\ve{\xi}\in\Omega_*,
  \end{equation}
   and the boundary condition $v(\ve{x};\ve{\xi})\bigr|_{\ve{\xi}\in\pt\Omega_*}\!\!=0$.
Here for the right-hand side~$\phi$, it was shown in \cite[Lemma 5.1]{FK10_TR1}
that
   \begin{gather}\label{eq:phi}
     \norm{\phi(\ve{x};\cdot)}{1;\Omega_*}\leq C\E^{-c_3\alpha/\eps}
   \end{gather}
   for some constant $c_3$.
   In view of \eqref{eq:pde_v}, the function $v$ can be represented
   using the Green's function $G$ as
   $$
     v(\ve{x};\ve{\xi}) = \iiint_{\Omega_*} G(\ve{s};\ve{\xi})\,\phi(\ve{x};\ve{s})\,d\ve{s}\,.
   $$
   So applying $\pt_{\xi_k}^p$ to this representation with $p=0,1,2$, $k=1,2,3$,
   for any sub-domain $\Omega_*'\subset\Omega_*$, one gets
   \begin{align*}
     \norm{\pt_{\xi_k}^p v(\ve{x};\cdot)}{1;\Omega_*'}
      &\leq
      \Bigl(\,\,\sup_{\ve{s}\in\Omega_*}
      \norm{\pt_{\xi_k}^p G(\ve{s};\cdot)}{1,\Omega_*'}\,\Bigr)
           \cdot\norm{\phi(\ve{x};\cdot)}{1,\Omega_*}
           \\
      &\leq\, C\E^{-c_3\alpha/\eps}\cdot \sup_{\ve{s}\in\Omega_*}\norm{\pt_{\xi_k}^p G(\ve{s};\cdot)}{1,\Omega_*},
   \end{align*}
   where we also used (\ref{eq:phi}).
   We shall use the above estimate with $\Omega_*':=\Omega_*$ for $p=0,1$
   and $\Omega_*':=\Omega_*\setminus B(\ve{x},\rho)$ for $p=2$.
As the bounds (\ref{eq_theorem}) of Theorem~\ref{thm:main_upper} remain valid for the domain
$\Omega_*$
and also
$\norm{G(\ve{x};\cdot)}{1,\Omega_*}\le C$
\cite{FK10_TR1, FK11_1}, one now concludes that
\begin{subequations}\label{v_bounds}
\begin{align}\notag
\norm{\pt^2_{\xi_k} v(\ve{x};\cdot)}{1;\Omega_*\setminus B(\ve{x},\rho)}&\le
C\E^{-c_3\alpha/\eps}\cdot\eps^{-1}(\ln(2+\eps/\rho)+|\ln\eps|)\\
& \le c'\, \eps^{-1} \ln(2+\eps/\rho)\quad\mbox{for~}k=1,2,3,
\intertext{where $c'$ is arbitrarily small provided that $\eps$ is sufficiently small.
Similarly}
\norm{ v(\ve{x};\cdot)}{1,1;\Omega_*}&\le
C\E^{-c_3\alpha/\eps}\cdot\eps^{-1/2}
 \le c'\, \eps,
 \intertext{which also implies}
\norm{ v(\ve{x};\cdot)}{1,1;\Omega_*\cap B(\ve{x},\rho)}
 &\le c'\, \min\bigl\{\rho/\eps,(\rho/\eps)^{1/2}\bigr\}.
 \end{align}
 \end{subequations}
 As $c'$ in the bounds (\ref{v_bounds}) can be made arbitrarily small,
 combining them with Lemma~\ref{lem_barG} yields
 the desired bounds of Theorem~\ref{thm:main_lower} for the domain $\Omega_*=(0,1)\times\R^2$.
 \qed

\section{Proof of Theorem~\ref{thm:main_lower} for the domain $\Omega=(0,1)^3$}\label{sec_bounded}
The proof of  Theorem~\ref{thm:main_lower} for the domain $\Omega=(0,1)^3$
is very similar to the above proof for the domain $\Omega_*=(0,1)\times\R^2$
presented in Section~\ref{sec:approx}.
The only difference is that instead of
 the approximation $\bar G$
   for the domain $(0,1)\times\R^2$ we now use
the approximation $\bar G_\cube$ defined by
   \begin{align*}
   \bar G_{\mbox{\tiny$\Box$}}
   (\ve{x};\ve\xi)&:= \bar G(\ve{x};\ve\xi)
            \,-\omega_0(\xi_2)\,\bar G(\ve{x};\xi_1,-\xi_2,\xi_3)
            \,\,-\omega_1(\xi_2)\,\bar G(\ve{x};\xi_1,2-\xi_2,\xi_3),
            \\
             \bar G_\cube(\ve{x};\ve\xi)&:=
             \bar G_{\mbox{\tiny$\Box$}}(\ve{x};\ve\xi) \!-\!\omega_0(\xi_3)\,\bar G_{\mbox{\tiny$\Box$}}(\ve{x};\xi_1,\xi_2,-\xi_3)
           \! -\!\omega_1(\xi_3)\,\bar G_{\mbox{\tiny$\Box$}}(\ve{x};\xi_1,\xi_2,2-\xi_3).
 \end{align*}
Here $\omega_0(t):=\omega(1-t)$ and $\omega_1(t):=\omega(t)$
with $\omega$ defined in (\ref{omega_def}),
so that
$\omega_k(k)=1$ and $\omega_k(1-k)=0$ for $k=0,1$.
This approximation was constructed employing the method of images;
an inclusion of  cut-off
functions ensures that it vanishes on $\pt\Omega$.

All the properties of $\bar G$ given in Section~\ref{sec:approx}
remain valid for this new approximation $\bar G_\cube$
with $\Omega_*$ replaced by $\Omega$ provided that
$\ve{x}\in[\frac14,\frac34]^3$.
We leave out the details and only note that the application of the method of images
in the $\xi_2$- and $\xi_3$-directions
is relatively straightforward  as an inspection of (\ref{eq:def_g0})
shows that in these directions, the fundamental solution $g$
is symmetric and exponentially decaying away from the singular point.
 \qed

\section{The Convection-Reaction-Diffusion Case}\label{sec:reac}
   We now slightly generalize  (\ref{eq:Lu}) by including a reaction term
  with a constant coefficient $\beta\ge 0$:
\begin{subequations}\label{crd}
   \begin{align}
     \tilde \LL_{\ve{x}}u(\ve{x})=-\eps\laplace_{\ve{x}}u(\ve{x})
                         -2\alpha\,\pt_{x_1}\! u(\ve{x})
                         +\beta u(\ve{x})
     &=f(\ve{x})&&\mbox{for }\ve{x}\in\Omega\\
     u(\ve{x})&=0&&\mbox{for }\ve{x}\in\partial\Omega.
   \end{align}
   \end{subequations}
Now the fundamental solution $g$ in $\R^3$ satisfies, for each fixed $\ve{x}\in\R^3$
the following version of (\ref{eq:Green_adj_const}) with the adjoint operator
$\tilde \LL^*_{\ve\xi}$:
   \begin{align*}
     \tilde \LL^*_{\ve\xi} g(\ve{x};\ve\xi)
         =-\eps\laplace_{\ve\xi} g(\ve{x};\ve\xi)+2\alpha\,\pt_{\xi_1} g(\ve{x};\ve\xi)+\beta g(\ve{x};\ve\xi)
        &=\delta(\ve{x}-\ve\xi)\quad\mbox{for~} \ve\xi\in\R^3.
   \end{align*}
   Again imitating a calculation of \cite{CK09,KSh87},
   one gets a version of (\ref{eq:def_g0}):
   \[
     g(\ve{x};\ve{\xi})=
      \frac{1}{4\pi\eps^2}\,
      \frac{\E^{\alpha\widehat\xi_{1,[x_1]}-\gamma\widehat r_{[x_1]}}}{\widehat r_{[x_1]}},
      \qquad\mbox{where}
      \quad
\gamma:=\sqrt{\alpha^2+\eps\beta}.
   \]
In view of $\gamma
                =\alpha+\frac{\beta}{2\alpha}\eps+\ord{\eps^2}$,
                an inspection of the proof of Theorem~\ref{thm:main_lower} shows
                that
 the lower bounds (\ref{eq_thm:main_lower}) remain valid for the
   convection-reaction-diffusion problem (\ref{crd}).
%
\section{Outlook for problems in  $\boldsymbol{n}$ dimensions
}\label{sec:outlook}
It was shown in \cite{FK10_1} that
 the upper bounds of Theorem~\ref{thm:main_upper} remain valid for
a two-dimensional variable-coefficient version
 of \eqref{eq:Lu}. Note that
 the fundamental solution
$g_{\R^2}$ that solves (\ref{eq:Green_adj_const}) in $\R^2$,
and also its derivatives involve
   the modified Bessel functions of second kind of order zero $K_0(\cdot)$ and order one
   $K_1(\cdot)$. This fundamental solution is given by
   \vspace{-0.1cm}
   \[
      g_{\R^2}(\ve{x};\ve{\xi}) = \frac{1}{2\pi\eps}\E^{\alpha\widehat\xi_1}K_0(\alpha\widehat r)
   \vspace{-0.1cm}\]
   with the notations $\ve{x},\,\ve{\xi}$, and $\widehat r$ appropriately adapted.
   Using this explicit representation, one can imitate
   the proof of Theorem~\ref{thm:main_lower}
   and get similar lower bounds for the two-dimensional case.
   A certain difficulty lies in having to deal with
the Bessel functions, for which one can simply employ asymptotic
   expansions \cite{AS64,NIST10}:
   \vspace{-0.1cm}
   \begin{align*}
   K_0(z) &= \left(\frac{\pi}{2z}\right)^{1/2}\E^{-z}\,\Bigl(1+\ord{z^{-1}}\Bigr)&&\mbox{for }|z|\gg1,
   \\[-0.1cm]
      K_1(z) &= K_0(z)\Bigl(1+\frac{1}{2z}+\ord{z^{-2}}\Bigr)&&\mbox{for }|z|\gg1,\\[-0.1cm]
K_0(z)&=-\ln z+\ord{1},\qquad
K_1(z)= \frac1z+\ord{1}&&\mbox{for }|z|\ll 1.
   \end{align*}
In this manner one gets the following result.
\begin{theorem}
The Green's function associated with problem \eqref{eq:Lu} in the
unit-square domain $\Omega:=(0,1)^2$, satisfies
a two-dimensional version of Theorem~\ref{thm:main_lower}.
\end{theorem}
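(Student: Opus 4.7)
The plan is to follow the three-stage strategy of Sections~\ref{sec:fundamental}--\ref{sec_bounded}, adapted to two dimensions with $g$ replaced by $g_{\R^2}$. Stage~1 establishes the 2D analog of Lemma~\ref{lem:fundamental} on the half-strip $\Omega_* = (0,1)\times\R$; stage~2 transfers these bounds to a bounded-domain approximation $\bar G$ built via the method of images in the $\xi_1$-direction; stage~3 extends to $\Omega = (0,1)^2$ by one further reflection in $\xi_2$ and absorbs the residual using the 2D upper bounds of \cite{FK10_1}.

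For stage~1 I would first compute explicit formulas for $\pt_{\xi_k} g_{\R^2}$ and $\pt^2_{\xi_k} g_{\R^2}$ using $K_0'(z) = -K_1(z)$ and $K_1'(z) = -K_0(z) - K_1(z)/z$, obtaining two-dimensional analogues of \eqref{g_xi1}--\eqref{g2_xi2_xi2} in terms of $K_0(\alpha\widehat r)$ and $K_1(\alpha\widehat r)$. Then on the far-field subdomain $\wom_2 = \{\max(1,|\widehat\xi_2|) \le \widehat\xi_1 \le \tfrac14\eps^{-1}\}$ I would substitute the large-argument expansions $K_0(z),K_1(z) \sim (\pi/(2z))^{1/2}\E^{-z}$ and carry out the same $\psi_2 = \widehat\xi_2/\sqrt{2\widehat\xi_1}$ rescaling as in the proof of Lemma~\ref{lem:fundamental}; the extra $\widehat r^{-1/2} \sim \widehat\xi_1^{-1/2}$ factor combines with the Jacobian $d\widehat\xi_2 = \sqrt{2\widehat\xi_1}\,d\psi_2$ so that the outer $\widehat\xi_1$-integrals still produce $|\ln\eps|$ and $\eps^{-1/2}$, respectively. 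On the near-field subdomain $\wom_1 = B(\ve{0},1) \cap \{\widehat\xi_1^2 \ge \widehat\xi_2^2\}$ (and its analogue for $k=2$) I would use the small-argument expansions $K_0(z) = -\ln z + \ord{1}$ and $K_1(z) = 1/z + \ord{1}$ to extract the $\rho/\eps$ and $\ln(2+\eps/\rho)$ contributions exactly as in parts~(ii)--(iv) of the proof of Lemma~\ref{lem:fundamental}.

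Stages~2 and~3 are essentially verbatim repetitions of Sections~\ref{sec:approx} and~\ref{sec_bounded}: define $\bar G$ by the image-plus-cut-off formula \eqref{bar_G_g} with $g$ replaced by $g_{\R^2}$; verify the weighted upper bounds analogous to Lemma~\ref{lem:g_lmbd_bounds} (straightforward, since on $\{\widehat r\ge 1\}$ the Bessel factors are dominated by $\E^{-\alpha\widehat r}\widehat r^{-1/2}$, which is more than enough to absorb the $\lambda^\pm$ weights via the exponential prefactor); reflect once more across $\{\xi_2 = 0\}$ and $\{\xi_2 = 1\}$ with the cut-offs $\omega_0,\omega_1$ to obtain the 2D analogue of $\bar G_\cube$; and estimate the residual $v = \bar G_\cube - G$ through the Green's-function representation $v(\ve{x};\ve{\xi}) = \iint G(\ve{s};\ve{\xi})\,\phi(\ve{x};\ve{s})\,d\ve{s}$ combined with the 2D analogue of \eqref{eq:phi} and the 2D upper bounds of \cite{FK10_1}.

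The main obstacle is the bookkeeping of the Bessel-function remainders in the derivative formulas. Since $K_0(z)$ and $K_1(z)$ agree to leading exponential order but differ by $\ord{1/z}$ as $z\to\infty$ (and by much larger $\ord{1}$ vs.\ $\ord{1/z}$ terms as $z\to 0$), one must check that the leading contributions in $\pt^2_{\xi_k} g_{\R^2}$ still have the same signs as in \eqref{g2_xi1_xi1}--\eqref{g2_xi2_xi2} on the chosen subdomains, so that the $[\,\cdot\,]^+$ truncations used in parts~(i), (iii), (iv) of the proof of Lemma~\ref{lem:fundamental} remain nontrivial. This may require enlarging the lower endpoint $\widehat\xi_1 \ge 1$ in the definition of $\wom_2$ to a larger $\eps$-independent constant (and analogously shrinking $c_1$), but it does not change the final form of the bounds.
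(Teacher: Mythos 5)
Your proposal is correct and follows essentially the same route as the paper, which itself only sketches this result: replace $g$ by $g_{\R^2}(\ve{x};\ve{\xi})=\frac{1}{2\pi\eps}\E^{\alpha\widehat\xi_1}K_0(\alpha\widehat r)$, control the Bessel factors via the large- and small-argument expansions of $K_0$ and $K_1$, and then imitate the three-stage argument of Sections~\ref{sec:fundamental}--\ref{sec_bounded}. Your added care about the sign of the bracketed expressions under the $[\,\cdot\,]^+$ truncation (possibly enlarging the lower endpoint of $\wom_2$) is exactly the ``certain difficulty'' the paper alludes to, handled in the intended way.
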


   Finally, let us take a look at the problem \eqref{eq:Lu}
   in the $n$-dimensional domain $\Omega:=(0,1)^n$ of
   an arbitrary dimension $n\geq 2$.
   The corresponding fundamental solution $g_{\R^n}$ is given by
   \[
      g_{\R^n}(\ve{x};\ve\xi)
         =\frac{1}{(2\pi)^{n/2}\,\eps^{n-1}}\,\left(\frac{\alpha}{\hat r}\right)^{n/2-1}
            \E^{\alpha\hat\xi_1}K_{n/2-1}(\alpha\hat r)
   \]
   with the modified Bessel functions $K_{n/2-1}(\cdot)$ of second kind of (half-)integer
   order $n/2-1$,
   and the notations $\ve{x},\,\ve{\xi}$, and $\widehat r$ appropriately adapted. Using asymptotic expansions of these Bessel functions \cite{AS64,NIST10},
one can again get a version of Theorem~\ref{thm:main_lower}.

%
   \bibliographystyle{plain}
   \bibliography{lower}
\end{document}